\theoremstyle{plain}
\newtheorem{corollary}{Corollary}
\newtheorem{definition}{Definition}
\newtheorem{example}{Example}
\newtheorem{lemma}{Lemma}
\newtheorem{problem}{Problem}
\newtheorem{proposition}{Proposition}
\newtheorem{remark}{Remark}
\newtheorem{theorem}{Theorem}
\numberwithin{equation}{section}
\begin{document}
\title[Invariance and means]{Invariance in a class of operations related to
weighted quasi-geometric means}
\author{Jimmy Devillet}
\curraddr{Mathematics Research Unit, University of Luxembourg, Maison du
Nombre, 6, avenue de la Fonte, L-4364 Esch-sur-Alzette, Luxembourg}
\email{jimmy.devillet@uni.lu}
\author{Janusz Matkowski}
\curraddr{Faculty of Mathematics Computer Science and Econometrics,
Univerity of Zielona G\'{o}ra, Szafrana 4A, PL 65-516 Zielona G\'{o}ra,
Poland}
\email{J.Matkowski@wmie.uz.zgora.pl}

\begin{abstract}
Let $I\subset (0,\infty )$ be an interval that is closed with respect to the
multiplication. The operations $C_{f,g}\colon I^{2}\rightarrow I$ of the
form
\begin{equation*}
C_{f,g}\left( x,y\right) =\left( f\circ g\right) ^{-1}\left( f\left(
x\right) \cdot g\left( y\right) \right) \text{,}
\end{equation*}%
where $f,g$ are bijections of $I$ are considered. Their connections with
generalized weighted quasi-geometric means is presented. It is shown that invariance\
question within the class of this operations leads to means of iterative
type and to a problem on a composite functional equation. An application of
the invariance identity to determine effectively the limit of the sequence
of iterates of some generalized quasi-geometric mean-type mapping, and the
form of all continuous functions which are invariant with respect to this
mapping are given. The equality of two considered operations is also discussed.
\end{abstract}

\maketitle

\section{Introduction}

\footnotetext{\textit{2010 Mathematics Subject Classification. }Primary:
26A18, 26E60, 39B12.
\par
\textit{Keywords and phrases:} invariant functions, mean, invariant mean,
reflexivity, iteration, functional equation
\par
{}}

Let $X$ be a set and let $\oplus \colon X^{2}\rightarrow X$ be a bisymmetric
operation. In \cite{Matko18}, given bijective functions $f,g\colon
X\rightarrow X$, the class of operations $D_{f,g}\colon X^{2}\rightarrow X$
defined by
\begin{equation*}
D_{f,g}(x,y)=(f\circ g)^{-1}(f(x)\oplus g(y)),\qquad x,y\in X,
\end{equation*}%
where "$\circ $" stands for the composition, was introduced and their
relation with means was investigated in the case where $X\subset \mathbb{R}$
is an open real interval and $\oplus $ is the usual addition. In particular,
given arbitrary bijective functions $f,g,h\colon X\rightarrow X$ such that $%
D_{f,g}$ and $D_{g,h}$ are reflexive, it was shown (see \cite[Theorem 1]%
{Matko18}) that the function $D_{f\circ g,g\circ h}$ is invariant with
respect to the mapping $\left( D_{f,g},D_{g,h}\right) :X^{2}\rightarrow
X^{2},$ i.e. that
\begin{equation}
D_{f\circ g,g\circ h}\circ \left( D_{f,g},D_{g,h}\right) =D_{f\circ g,g\circ
h}\text{.}  \tag{1}
\end{equation}

In the present paper, given an interval $I\subset (0,\infty )$ that is
closed with respect to the multiplication and bijective functions $f,g\colon I\rightarrow
I $, we consider the class of operations $C_{f,g}:I^{2}\rightarrow I$ of the
form%
\begin{equation*}
C_{f,g}\left( x,y\right) =\left( f\circ g\right) ^{-1}\left( f\left(
x\right) \cdot g\left( y\right) \right) \text{,}
\end{equation*}%
where "$\cdot $" (which will be sometimes omitted) stands for the usual
multiplication. (Of course $C_{f,g}$ form a subclass of the operations $%
D_{f,g}.)$

The similarity of $C_{f,g}$ to the generalized weighted quasi-geometric
means introduced in Section 2, motivates our considerations in Section 3,
where we examine conditions under which $C_{f,g}$ is a bivariable mean. The
reflexivity property of every mean leads to the iterative functional
equation
\begin{equation*}
f\left( g\left( x\right) \right) =f\left( x\right) \cdot g\left( x\right)
\text{, \ \ \ }x\in I,
\end{equation*}%
where $I=\left( 1,\infty \right) $ (or $I=\left[ 1,\infty \right) $) and the
functions $f$ and $g$ are unknown. Under some natural conditions, Theorem 3,
the main result of this section, says in particular that $C_{f,g}$ is a
bivariable mean in $\left( 1,\infty \right) $, if and only if,
\begin{equation*}
f=\prod_{k=0}^{\infty }g^{-k},
\end{equation*}%
where the infinite product $\prod_{k=0}^{\infty }g^{-k}$ of iterates of $%
g^{-1}\,\ $converges uniformly on compact subsets of $\left( 1,\infty
\right) $; moreover $C_{f,g}=\mathcal{C}_{g}$, where
\begin{equation*}
\mathcal{C}_{g}\left( x,y\right) :=\left( \prod_{k=0}^{\infty
}g^{-k+1}\right) ^{-1}\left( \prod_{k=0}^{\infty }g^{-k}\left( x\right)
\cdot g\left( y\right) \right) ,\ \ \ \ x,y>1,
\end{equation*}%
which justify the names: \textit{iterative type mean} for $\mathcal{C}_{g}$,
and \textit{iterative generator} of this mean for $g$.

If $C_{f,g}=\mathcal{C}_{g}$ and $C_{g,h}=\mathcal{C}_{h},$ then, in view of
\cite[Theorem 1]{Matko18}, the function $C_{f\circ g,g\circ h}$\ is
invariant with respect to the mean type mapping $\left( \mathcal{C}_{g},%
\mathcal{C}_{h}\right) $.

On the other hand $\mathcal{C}_{g}$ (and $\mathcal{C}_{h}$) is a very
special case of \textit{generalized weighted quasi-geometric mean} $%
G_{\varphi ,\psi }:I^{2}\rightarrow I$, of the form%
\begin{equation*}
G_{\varphi ,\psi }\left( x,y\right) =\left( \varphi \cdot \psi \right)
^{-1}\left( \varphi \left( x\right) \cdot \psi \left( y\right) \right) \text{%
, \ \ \ \ }x,y\in I\text{,}
\end{equation*}%
where $\varphi ,\psi :I\rightarrow (0,\infty )$ are continuous, of same type
monotonicity, and such that $\varphi \cdot \psi $ is strictly monotonic in
the interval $I$ (see Section 2). Moreover, since generalized weighted
quasi-geometric means can be considered as generalized strict weighted
quasi-arithmetic means (see Section 2), if $G_{\varphi ,\psi }$ and $G_{\psi
,\gamma }$ are two generalized weighted quasi-geometric means, then $%
G_{\varphi \cdot \psi ,\psi \cdot \gamma },$ the mean of the same type, is a
unique mean that is invariant with respect to the mean-type mapping $\left(
G_{\varphi ,\psi },G_{\psi ,\gamma }\right) :I^{2}\rightarrow I^{2}$ (see
\cite{Matko14}). \ This leads to natural and equivalent questions: \ is $%
C_{f\circ g,g\circ h}$ a mean; is the operation $C_{f\circ g,g\circ h}$ a
generalized weighted quasi-geometric mean; or simply can the three
operations $C_{f,g}$, $C_{g,h}$ and $C_{f\circ g,g\circ h}$ be means
simultaneously? In Section 4 we prove that it can happen iff
\begin{equation*}
g=\prod_{i=0}^{\infty }h^{-i}\text{, \ \ \ \ \ \ \ \ \ }f=\prod_{j=0}^{%
\infty }\left( \prod_{i=0}^{\infty }h^{-i}\right) ^{-j}\text{, }
\end{equation*}%
and $h$ satisfies "strongly" composite functional equation
\begin{equation*}
\text{\ }\left( \prod_{j=0}^{\infty }\left( \prod_{i=0}^{\infty
}h^{-i}\right) ^{-j+1}\right) =\prod_{k=0}^{\infty }\left(
\prod_{i=0}^{\infty }h^{-i+1}\right) ^{-k},
\end{equation*}%
where $i,j,k$ stand for the indices of iterates of the suitable functions
(Theorem 4). We propose as an open problem to find the continuous solutions $%
h:\left( 1,\infty \right) \rightarrow \left( 1,\infty \right) .$ In
illustrative Example 2 (with $h\left( x\right) =x^{\frac{1}{w}}$ and $w\in
\left( 0,1\right) )$ we get $\mathcal{C}_{g}\left( x,y\right) =x^{1-w}y^{w},$
$\mathcal{C}_{h}\left( x,y\right) =x^{w}y^{1-w},$ and $C_{f\circ g,g\circ h}=%
\mathcal{G}^{2w(1-w)}$, where $\mathcal{G}\left( x,y\right) :=\sqrt{xy}$ is
the geometric mean, which shows $C_{f\circ g,g\circ h}$ need not be a mean.

In section 5 we apply the invariance identity to determine effectively the
limit of the sequence of iterates of some generalized weighted
quasi-geometric mean-type mappings, as well as, to find the form of all
continuous functions which are invariant with respect to these mappings.

In section 6 the equality of two considered operations is discussed.

\section{Generalized weighted quasi-geometric means}

Let $I\subset \mathbb{R}$ be an interval. Recall that a function $M\colon
I^{2}\rightarrow \mathbb{R}$ is said to be a \emph{(bivariable) mean} if it
is internal, i.e.
\begin{equation*}
\min \left( x,y\right) \leq M\left( x,y\right) \leq \max \left( x,y\right)
\text{, \ \ \ \ \ }x,y\in I,
\end{equation*}%
and a \emph{(bivariable) strict mean} if it is a mean and these inequalities
are sharp for all $x\neq y$.

Recall some well known properties of means.

\begin{remark}
The following three conditions are equivalent:

\ (i) a function $M\colon I^{2}\rightarrow \mathbb{R}$ is a mean in an
interval $I;$

(ii) $M\left( J^{2}\right) \subset J$ for every subinterval $J\subset I$;

(iii) \ $M\left( J^{2}\right) =J$ for every subinterval $J\subset I.$
\end{remark}

\begin{remark}
(i) If $M$ is a mean in $I,$ then $M$ is \emph{reflexive, }i.e.%
\begin{equation*}
M\left( x,x\right) =x\text{, \ \ \ \ \ }x\in I\text{,}
\end{equation*}%
(but the converse implication fails).

(ii) If a function $M\colon I^{2}\rightarrow \mathbb{R}$ is reflexive and
(strictly) increasing in each variable then it is a (strict) mean in
$I$.
\end{remark}

Among many classes of means, one of the most popular is the family of
quasi-arithmetic means, which have the following generalization.

If the functions $\varphi ,\psi :I\rightarrow \mathbb{R}$ are continuous and
both strictly increasing or both strictly decreasing then the function $%
A_{\varphi ,\psi }:I^{2}\rightarrow I$ defined by
\begin{equation*}
A_{\varphi ,\psi }(x,y):=(\varphi +\psi )^{-1}(\varphi (x)+\psi (y)),
\end{equation*}%
is a strict mean, called a \textit{generalized strict weighted
quasi-arithmetic mean in }$I$ (see \cite{Matko10}), where (with a little
modified notation)\ the following result is proved:

\begin{theorem}
\label{thm:eq} Suppose that $A_{\varphi ,\psi }$ and $A_{\Phi ,\Psi }$ are
\textit{generalized strict weighted quasi-arithmetic means in }$I.$\textit{\
Then }$A_{\Phi ,\Psi }=A_{\varphi ,\psi }$ if and only if there exist $%
\alpha ,\beta ,\gamma \in \mathbb{R}$, $\alpha \neq 0,$ such that%
\begin{equation*}
\Phi (x)=\alpha \varphi (x)+\beta ,\qquad \Psi (x)=\alpha \psi (x)+\gamma
,\qquad x\in I.
\end{equation*}
\end{theorem}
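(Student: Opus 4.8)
The plan is to treat the two implications separately, with the nontrivial content lying in reducing the coincidence of the two means to a Pexider functional equation.

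For the \emph{sufficiency}, suppose $\Phi = \alpha\varphi + \beta$ and $\Psi = \alpha\psi + \gamma$ with $\alpha \neq 0$. Then $\Phi + \Psi = \alpha(\varphi+\psi) + (\beta+\gamma)$ and $\Phi(x) + \Psi(y) = \alpha(\varphi(x) + \psi(y)) + (\beta+\gamma)$. Writing $z := A_{\Phi,\Psi}(x,y)$, the additive constant $\beta + \gamma$ cancels when one passes to the defining relation $(\Phi+\Psi)(z) = \Phi(x) + \Psi(y)$, and dividing by $\alpha$ yields $(\varphi+\psi)(z) = \varphi(x) + \psi(y)$, that is, $z = A_{\varphi,\psi}(x,y)$. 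So I would dispose of this direction by direct substitution.

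For the \emph{necessity}, assume $A_{\Phi,\Psi} = A_{\varphi,\psi} =: M$. For fixed $x,y \in I$ set $z := M(x,y)$; by the definitions of the two means, $z$ satisfies simultaneously
\[
(\varphi+\psi)(z) = \varphi(x) + \psi(y), \qquad (\Phi+\Psi)(z) = \Phi(x) + \Psi(y).
\]
Since $\varphi,\psi$ (resp. $\Phi,\Psi$) are continuous and of the same strict monotonicity, $F := \varphi+\psi$ and $G := \Phi+\Psi$ are continuous strictly monotone bijections of $I$ onto the intervals $F(I)$, $G(I)$, so $F^{-1}$ is continuous. Eliminating $z = F^{-1}(\varphi(x)+\psi(y))$ and passing to the new variables $s := \varphi(x) \in \varphi(I)$, $t := \psi(y) \in \psi(I)$, I obtain
\[
H(s+t) = P(s) + Q(t), \qquad s \in \varphi(I),\ t \in \psi(I),
\]
where $H := G\circ F^{-1}$, $P := \Phi\circ\varphi^{-1}$ and $Q := \Psi\circ\psi^{-1}$ are continuous. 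Note that the left-hand side is well defined precisely because $M$ is internal: $\varphi(x)+\psi(y) = F(M(x,y))$ with $M(x,y) \in I$, hence it lies in the domain $F(I)$ of $F^{-1}$.

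This is a Pexider equation on a rectangular domain $\varphi(I)\times\psi(I)$, so the key step is to invoke its solution under continuity: there exist constants $\alpha,\beta,\gamma$ with
\[
P(s) = \alpha s + \beta, \qquad Q(t) = \alpha t + \gamma, \qquad H(u) = \alpha u + \beta + \gamma.
\]
Undoing the substitutions, $P(\varphi(x)) = \Phi(x)$ and $Q(\psi(y)) = \Psi(y)$ give exactly $\Phi = \alpha\varphi + \beta$ and $\Psi = \alpha\psi + \gamma$, while $\alpha \neq 0$ follows since otherwise $\Phi$ would be constant, contradicting that it is a strictly monotone generator. The main obstacle is the Pexider reduction itself: one must verify that the composite functions are genuinely defined on a full product of intervals and that $H$ is never evaluated outside $F(I)$, both of which hinge on the internality of $M$ guaranteed by the hypothesis that $A_{\varphi,\psi}$ is a mean. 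Once the domain is rectangular and the data continuous, the additive function underlying Pexider's equation is forced to be linear and the affine conclusion follows; a variant avoiding appeal to the theory of additive functions is to note that $P$ and $Q$ are strictly monotone, being compositions of strictly monotone maps, so the additive solution is monotone and hence linear.
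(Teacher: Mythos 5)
Your proof is correct, but note that the paper itself does not prove this theorem: it is quoted verbatim (with adjusted notation) from the reference \cite{Matko10}, so there is no internal proof to compare against. Your argument is the natural one and, as it happens, it is exactly the technique this paper employs elsewhere: in Section 6, Propositions 4 and 5 reduce the equality of two operations $C_{f,g}=C_{\phi,\psi}$ (resp.\ $D_{f,g}=D_{\bar f,\bar g}$) to a Pexider equation on a rectangular domain and invoke the continuous solutions via \cite{Acz2006}, just as you do. Your reduction is sound on the key points: the substitution $s=\varphi(x)$, $t=\psi(y)$ produces the equation $H(s+t)=P(s)+Q(t)$ on the genuine rectangle $\varphi(I)\times\psi(I)$ of nondegenerate intervals; the well-definedness of $H=(\Phi+\Psi)\circ(\varphi+\psi)^{-1}$ on the sumset follows from the internality (indeed, from the mere well-definedness) of $A_{\varphi,\psi}$; continuity (or, as you observe, the strict monotonicity of $P=\Phi\circ\varphi^{-1}$ and $Q=\Psi\circ\psi^{-1}$) forces the affine form; and $\alpha\neq 0$ follows from strict monotonicity of $\Phi$. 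The sufficiency direction by cancellation of $\beta+\gamma$ and division by $\alpha$ is likewise complete. One small point worth making explicit if you write this up: the Pexider theorem on a restricted (rectangular) domain is not literally the classical statement on $\mathbb{R}^2$, so one should either cite a restricted-domain version (as in Acz\'el's book) or note that the associated Jensen-type equation for $H$ on the interval $\varphi(I)+\psi(I)$ yields affinity under continuity; your monotonicity remark serves the same purpose.
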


If the functions $f,g:I\rightarrow \left( 0,\infty \right) $ are continuous
and both strictly increasing or both strictly decreasing then, similarly as
in the case $A_{\varphi ,\psi }$ one can show that the function $%
G_{f,g}:I^{2}\rightarrow I$ defined by
\begin{equation*}
G_{f,g}(x,y):=(f\cdot g)^{-1}(f(x)g(y)),
\end{equation*}%
is a strict mean (Here "$\cdot $" stands for the usual multiplication of
functions: $\left( f\cdot g\right) \left( x\right) :=f\left( x\right)
g\left( x\right) $ for every $x\in I,$ and we omit writing it later). In the
sequel, $G_{f,g}$ is called a \textit{generalized weighted quasi-geometric
mean in }$I$ of generators $f$ and $g.$

The following remark, which is easy to verify, provides a one-to-one
correspondence between the class of generalized strict weighted
quasi-arithmetic means and the class of generalized weighted quasi-geometric
means.

\begin{remark}
\label{rem:bij} For all continuous and of the same type strict monotonicity
functions $f,g\colon I\rightarrow (0,\infty ),$%
\begin{equation*}
G_{f,g}=A_{\log \circ f\text{,}\log \circ g};
\end{equation*}%
and, for all continuous and of the same type strict monotonicty functions $%
\varphi ,\psi \colon I\rightarrow \mathbb{R}$,%
\begin{equation*}
A_{\varphi ,\psi }=G_{\exp \circ \phi ,\exp \circ \psi }.
\end{equation*}
\end{remark}

Applying Theorem \ref{thm:eq} we obtain the following

\begin{theorem}
Suppose that $G_{f,g}$ and $G_{\bar{f},\bar{g}}$ are \textit{generalized
weighted quasi-geometric means in }$I.$\textit{\ Then }%
\begin{equation*}
G_{\bar{f},\bar{g}}=G_{f,g}
\end{equation*}%
if, and only if, there exist $a\neq 0,b,c>0$ such that%
\begin{equation*}
\bar{f}(x)=b\left[ f\left( x\right) \right] ^{a},\qquad \bar{g}(x)=c\left[
g\left( x\right) \right] ^{a},\qquad x\in I.
\end{equation*}
\end{theorem}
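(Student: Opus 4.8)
The plan is to reduce the statement to Theorem~\ref{thm:eq} via the one-to-one correspondence recorded in Remark~\ref{rem:bij}. Since $f,g\colon I\to(0,\infty)$ are continuous and of the same type of strict monotonicity, and $\log$ is a strictly increasing homeomorphism of $(0,\infty)$ onto $\mathbb{R}$, the compositions $\log\circ f$ and $\log\circ g$ are again continuous and of the same type of strict monotonicity; the same holds for $\log\circ\bar{f}$ and $\log\circ\bar{g}$. Hence $A_{\log\circ f,\log\circ g}$ and $A_{\log\circ\bar{f},\log\circ\bar{g}}$ are genuine generalized strict weighted quasi-arithmetic means in $I$, so Theorem~\ref{thm:eq} is applicable to them.

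First I would use Remark~\ref{rem:bij} to rewrite the equality $G_{\bar{f},\bar{g}}=G_{f,g}$ as an equality of quasi-arithmetic means, namely $A_{\log\circ\bar{f},\log\circ\bar{g}}=A_{\log\circ f,\log\circ g}$. Applying Theorem~\ref{thm:eq} with $\varphi=\log\circ f$, $\psi=\log\circ g$, $\Phi=\log\circ\bar{f}$, $\Psi=\log\circ\bar{g}$, this equality holds if and only if there exist $\alpha\neq 0$ and $\beta,\gamma\in\mathbb{R}$ with
\[
\log\bar{f}(x)=\alpha\log f(x)+\beta,\qquad \log\bar{g}(x)=\alpha\log g(x)+\gamma,\qquad x\in I.
\]

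Next I would exponentiate both relations. Writing $a=\alpha$, $b=e^{\beta}$ and $c=e^{\gamma}$, the two identities become $\bar{f}(x)=b\,[f(x)]^{a}$ and $\bar{g}(x)=c\,[g(x)]^{a}$, where automatically $a\neq 0$ (since $\alpha\neq 0$) and $b,c>0$ (being exponentials). Conversely, any triple $(a,b,c)$ with $a\neq 0$ and $b,c>0$ yields, after taking logarithms, an affine pair of the required form with $(\alpha,\beta,\gamma)=(a,\log b,\log c)$, so the stated parametrization is precisely equivalent to the affine relation appearing in Theorem~\ref{thm:eq}. Because that theorem provides an ``if and only if'', both implications are obtained simultaneously through the chain of equivalences, and no separate argument for either direction is needed.

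The only point requiring care --- and the step I would treat as the main obstacle --- is the verification that passing through $\log$ preserves all the hypotheses of Theorem~\ref{thm:eq}, in particular that $\log\circ f$ and $\log\circ g$ inherit the ``same type of strict monotonicity'' condition from $f$ and $g$, so that the transported means are indeed generalized strict weighted quasi-arithmetic means. Once this transfer of hypotheses is established, the remainder is a purely formal translation between the additive parameters $(\alpha,\beta,\gamma)$ of the quasi-arithmetic setting and the multiplicative parameters $(a,b,c)$ of the quasi-geometric setting.
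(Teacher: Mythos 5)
Your proposal is correct and follows essentially the same route as the paper's own proof: both reduce the statement to Theorem \ref{thm:eq} via the logarithmic correspondence of Remark \ref{rem:bij}, with the identical substitutions $\varphi=\log\circ f$, $\psi=\log\circ g$, $\Phi=\log\circ\bar{f}$, $\Psi=\log\circ\bar{g}$, and then exponentiate, setting $a=\alpha$, $b=e^{\beta}$, $c=e^{\gamma}$. Your extra remark verifying that $\log$ preserves the monotonicity hypotheses is a point the paper leaves implicit, but it is the same argument.
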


\begin{proof}
Setting $\varphi =\log \circ f$, $\ \psi =\log \circ g$, \ $\Phi =\log \circ
\bar{f}$, $\Psi =\log \circ \bar{g}$, we see that\ $G_{f,g}=G_{\bar{f},\bar{g%
}}$ iff \ $A_{\Phi ,\Psi }=A_{\varphi ,\psi }$, and, in view of Theorem \ref%
{thm:eq}, iff there are $\alpha ,\beta ,\gamma \in \mathbb{R}$, $\alpha \neq
0,$ such that%
\begin{equation*}
\Phi (x)=\alpha \varphi (x)+\beta ,\qquad \Psi (x)=\alpha \psi (x)+\gamma
,\qquad x\in I,
\end{equation*}%
that is, iff%
\begin{equation*}
\log \circ \bar{f}(x)=\alpha \log \circ f(x)+\beta ,\qquad \log \circ \bar{g}%
(x)=\alpha \log \circ g(x)+\gamma ,\qquad x\in I,
\end{equation*}%
\begin{equation*}
\bar{f}(x)=e^{\beta }\left[ f(x)\right] ^{\alpha },\qquad \bar{g}%
(x)=e^{\gamma }\left[ g(x)\right] ^{\alpha },\qquad x\in I.
\end{equation*}%
Setting $a=\alpha $,$b=e^{\beta }$, $c=e^{\gamma }$, we obtain the result.
\end{proof}

\bigskip

\section{Operation $C_{f,g}$ and means}

Let $I\subset \left( 0,\infty \right)$ be an interval that is closed with
respect to the multiplication and let $f,g:I\rightarrow I$ be bijective
functions. Then, the two-variable function $C_{f,g}:I^{2}\rightarrow I$
given by%
\begin{equation}
C_{f,g}\left( x,y\right) :=\left( f\circ g\right) ^{-1}\left( f\left(
x\right) g\left( y\right) \right) \text{, \ \ \ \ }x,y\in I\text{,}  \tag{2}
\end{equation}%
is correctly defined.

In this section we examine when $C_{f,g}$ is a mean.

\begin{lemma}
If $C_{f,g}:I^{2}\rightarrow I$ defined by (2) is symmetric then $g=cf$ for
some real constant $c\in I$, $c\neq 0$; if moreover $C_{f,g}$ is a mean, then%
\begin{equation*}
C_{f,g}\left( x,y\right) =\mathcal{G}\left( x,y\right) \text{, \ \ \ \ }%
x,y\in I\,\text{, \ \ \ \ \ \ }
\end{equation*}%
where $\mathcal{G}\left( x,y\right) :=\sqrt{xy}$ is the geometric mean.
\end{lemma}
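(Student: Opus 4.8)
The plan is to treat the two assertions in turn, extracting first the multiplicative relation $g=cf$ from symmetry, and then the explicit forms of $f$ and $g$ from reflexivity.

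First I would use symmetry. Since $f$ and $g$ are bijections of $I$, the composite $f\circ g$ is a bijection, so $(f\circ g)^{-1}$ is injective. Writing out $C_{f,g}(x,y)=C_{f,g}(y,x)$ and cancelling the injective map $(f\circ g)^{-1}$ reduces the symmetry condition to the pointwise identity $f(x)g(y)=f(y)g(x)$ for all $x,y\in I$. As the ranges of $f$ and $g$ lie in $(0,\infty)$, I may divide and conclude that $g(x)/f(x)$ is independent of $x$; calling this positive constant $c$, we get $g=cf$. Bijectivity of $g$ then forces $g(I)=cf(I)=cI$ to equal $I$, i.e. $cI=I$, which is exactly what makes $c$ an admissible constant lying in $I$.

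Next, assuming in addition that $C_{f,g}$ is a mean, I would invoke Remark 2, so that $C_{f,g}$ is reflexive: $C_{f,g}(x,x)=x$. By (2) this is equivalent to the iterative equation $f(g(x))=f(x)g(x)$ for all $x\in I$. Substituting $g=cf$ turns this into $f\left(cf(x)\right)=c\,[f(x)]^{2}$. The key step is to linearise this composite equation using the surjectivity of $f$: setting $t=f(x)$, which ranges over all of $I$ as $x$ does, and noting that the arguments $ct=g(x)$ indeed lie in $I$, I obtain $f(ct)=ct^{2}$ for every $t\in I$. Because $cI=I$, the substitution $s=ct$ covers all of $I$ and yields the explicit form $f(s)=s^{2}/c$, whence $g(s)=cf(s)=s^{2}$.

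Finally I would substitute these forms back into (2). Computing $(f\circ g)(x)=x^{4}/c$, hence $(f\circ g)^{-1}(w)=(cw)^{1/4}$, and $f(x)g(y)=x^{2}y^{2}/c$, gives $C_{f,g}(x,y)=\left(x^{2}y^{2}\right)^{1/4}=\sqrt{xy}=\mathcal{G}(x,y)$, as claimed. The main obstacle is the middle paragraph: one must correctly exploit the surjectivity of $f$ to pass from the composite equation $f(cf(x))=c[f(x)]^{2}$ to the pointwise relation $f(s)=s^{2}/c$, and verify $cI=I$ so that this relation holds on all of $I$; by contrast the symmetry reduction and the closing substitution are routine.
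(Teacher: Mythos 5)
Your proof is correct and follows essentially the same route as the paper: symmetry yields $f(x)g(y)=f(y)g(x)$, hence $g=cf$; reflexivity gives a composite functional equation which surjectivity linearises into an explicit power form; and direct substitution into (2) produces $\sqrt{xy}$. The only (cosmetic) difference is that the paper rewrites the reflexivity equation in terms of $g$ alone, getting $g(g(x))=\left(g(x)\right)^{2}$ and hence $g(x)=x^{2}$ from surjectivity of $g$, whereas you use surjectivity of $f$ and carry the constant $c$ throughout, obtaining $f(s)=s^{2}/c$ --- which is in fact slightly more careful than the paper's bald assertion that $f(x)=x^{2}$ (tacitly $c=1$), since your computation shows the geometric-mean conclusion holds for every admissible $c$.
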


\begin{proof}
By (2), the symmetry of $C_{f,g}$ implies that $f\left( x\right) g\left(
y\right) =f\left( y\right) g\left( x\right) $ for all $x,y\in I$, that is $%
\frac{g\left( x\right) }{f\left( x\right) }=\frac{g\left( y\right) }{f\left(
y\right) }$ \ for all\ $x,y\in I,$ whence $\frac{g}{f}=c$ \ for some
positive real constant $c$, which proves the first result. If $C_{f,g}$ is a
mean, making use of (2) and the reflexivity of $C_{f,g}$, we get\ $f\left(
cf\left( x\right) \right) =f\left( x\right) \left( cf\left( x\right) \right)
$ for all $x\in I$ or, equivalently, \ $cf\left( cf\left( x\right) \right)
=\left( cf\left( x\right) \right) \left( cf\left( x\right) \right) $ for all
$x\in I,$ whence, in view of the equality $g=cf,$ we get $g\left( g\left(
x\right) \right) =\left( g\left( x\right) \right) ^{2}$ for all $x\in I$.
Since $g$ is bijective, it follows that $g\left( x\right) =x^{2}$ for all $%
x\in I.$ Now it is easy to get that $f\left( x\right) =x^{2}$ for all $x\in
I $. Consequently, making use of of (2), we obtain, for all $x,y\in I$,%
\begin{equation*}
C_{f,g}\left( x,y\right) =\sqrt{\sqrt{x^{2}y^{2}}}=\sqrt{xy}=\mathcal{G}%
\left( x,y\right) ,
\end{equation*}%
which completes the proof.
\end{proof}

Since every mean is reflexive (Remark 2(i)), we first consider conditions
for reflexivity of $C_{f,g}$. We begin with

\begin{lemma}
Let $I$ be a nontrivial interval that is closed with respect to the
multiplication. Assume that $f,g:I\rightarrow I $ are bijective functions
such that $C_{f,g}$ defined by (2) is reflexive, i.e. that
\begin{equation}
\left( f\circ g\right) ^{-1}\left( f\left( x\right)g\left( x\right) \right)
=x\text{, \ \ \ \ \ }x\in I.  \tag{3}
\end{equation}%
If $g$ has a fixpoint $x_{0}\in I,$ then $x_{0}=1;$ in particular $1$ must
belong to $I;$

if moreover $g$ is continuous and $I\subset \left[ 1,\infty \right) ,$ then $%
I=\left[ 1,\infty \right) $; $g$ is strictly increasing and either
\begin{equation*}
\text{\ \ \ }1<g\left( x\right) <x\text{, \ \ \ }x\in \left( 1,\infty
\right) \text{, }
\end{equation*}%
\ \ \ \ \ or%
\begin{equation*}
g\left( x\right) >x,\ \ \ \ \ \ x\in \left( 1,\infty \right) .
\end{equation*}%
\
\end{lemma}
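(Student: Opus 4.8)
The plan is to first trade the reflexivity condition for a functional equation that is easier to manipulate. Applying $f\circ g$ to both sides of (3) turns it into
\begin{equation*}
f\bigl(g(x)\bigr)=f(x)\,g(x),\qquad x\in I,\tag{$\star$}
\end{equation*}
which is the identity I would use throughout. The first assertion is then immediate: if $g(x_{0})=x_{0}$, substituting into $(\star)$ gives $f(x_{0})=f(x_{0})\,x_{0}$, and since $f(x_{0})\in I\subset(0,\infty)$ is nonzero it may be cancelled, forcing $x_{0}=1$; hence $1=x_{0}\in I$.

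For the second assertion I would first pin down the shape of $I$. Being a nontrivial interval contained in $[1,\infty)$, it contains some $x>1$; multiplicative closure then gives $x^{n}\in I$ for every $n$, and since $x^{n}\to\infty$ we get $\sup I=\infty$, so $I$ is a right-unbounded interval with left endpoint $a:=\inf I\ge 1$. As $g$ is a continuous bijection of the interval $I$ onto itself it is strictly monotone, so it remains only to exclude the decreasing case. A strictly decreasing continuous self-bijection of $I$ produces a sign change in $g(x)-x$ (since $g(x)\to\infty$ as $x\to a^{+}$ while $g(x)\to a$ as $x\to\infty$), hence by the intermediate value theorem it has a fixpoint, which by the first part equals $1$; thus $1\in I$, so $1=\min I$, and a decreasing self-bijection would have to send this minimum to $\sup I=\infty$, which is impossible. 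Consequently $g$ is strictly increasing.

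With $g$ increasing I would next locate the left endpoint: an increasing self-bijection maps $\inf I$ to $\inf I$, so once the endpoint $a$ belongs to $I$ it is a fixpoint and the first part yields $a=1$, giving $I=[1,\infty)$ and $g(1)=1$. For the dichotomy, observe that on $(1,\infty)$ the map $g$ has no fixpoint, since by the first part any fixpoint equals $1$; therefore $g(x)-x$ is continuous and nowhere zero on $(1,\infty)$, and by the intermediate value theorem it keeps a constant sign there, so either $g(x)<x$ throughout or $g(x)>x$ throughout. Because $g$ is increasing with $g(1)=1$ we have $g(x)>1$ for $x>1$, so the first alternative reads $1<g(x)<x$, which is exactly the stated pair of possibilities.

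The step I expect to be most delicate is the analysis of the left endpoint, i.e. showing that $\inf I=1$ so that $I$ is the closed half-line $[1,\infty)$: the fixpoint identity from the first part settles this as soon as the endpoint belongs to $I$, but the genuine care is in the boundary behaviour near $\inf I$, where one must use the continuity of $g$ together with the fact that $f$ takes values in $[1,\infty)$ to rule out a left endpoint strictly larger than $1$, since $f$ itself is not assumed continuous. The monotonicity dichotomy and the fixpoint computations, by contrast, are routine once $(\star)$ is available.
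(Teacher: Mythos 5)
Your first assertion is proved exactly as in the paper, and your monotonicity analysis (excluding a decreasing $g$ via its forced fixpoint and the minimum-goes-to-supremum contradiction, then getting the dichotomy from the intermediate value theorem applied to $g(x)-x$) is correct and in fact more careful than the paper's own wording. The genuine gap is precisely the step you yourself flag as ``delicate'': the case where $a=\inf I$ is not attained, i.e. $I=(a,\infty)$. You propose to close it by using the continuity of $g$ together with the fact that $f$ takes values in $[1,\infty)$ to rule out a left endpoint larger than $1$; but no such argument can exist, and even ruling out $a>1$ would not suffice, since you would still have to exclude $I=(1,\infty)$ itself --- and that is impossible. Take $I=(1,\infty)$ and $f(x)=g(x)=x^{2}$: both are continuous bijections of $I$, the interval is nontrivial, lies in $[1,\infty)$ and is closed under multiplication, and $C_{f,g}(x,y)=\sqrt{xy}$ is reflexive; yet $I\neq[1,\infty)$. (This is essentially the paper's Example 1 with $w=\tfrac{1}{2}$.) So the statement you are actually trying to prove --- the second conclusion without any fixpoint hypothesis --- is false.

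The resolution is a matter of reading: the hypothesis that $g$ has a fixpoint in $I$ is a standing assumption for the whole lemma, and the clause ``if moreover $g$ is continuous and $I\subset[1,\infty)$'' adds to it rather than replaces it. This is how the paper's proof runs: the fixpoint gives $1\in I$ (your first paragraph), and then, since $I$ is an interval containing $1$ that is unbounded above (your own $x^{n}$ argument), one gets $I=[1,\infty)$ in one line; after that $g(1)=1$, $g$ is increasing, and your dichotomy argument finishes the proof. Note that in the counterexample above $g(x)=x^{2}$ has no fixpoint in $(1,\infty)$, which is why it does not contradict the lemma under this reading. So the repair is: keep your first and last paragraphs, delete the endpoint analysis, and replace it by the observation that $1\in I$ by the standing hypothesis together with the first part, whence $I=[1,\infty)$.
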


\begin{proof}
From (3) we have
\begin{equation*}
f\left( g\left( x\right) \right) =f\left( x\right)g\left( x\right) \text{, \
\ \ }x\in I.
\end{equation*}%
Thus, if $g\left( x_{0}\right) =x_{0}$ for some $x_{0}\in I$, then $f\left(
x_{0}\right) =f\left( x_{0}\right)x_{0}$, so $x_{0}=1.$ Hence, if $I\subset %
\left[1,\infty \right) $ then, as $I$ is nontrivial and closed with respect
to the multiplication, it must be of the form $\left[1,\infty \right) $. Since $%
g $ has no fixpoints in $\left(1,\infty \right) $, the continuity of $g$
implies it must be increasing and either \ $1<g\left( x\right) <x$ \ for all
$x\in I$, or $g\left( x\right) >x$ for all $x\in I$.
\end{proof}

This lemma justifies the assumption that $I=\left(1,\infty \right) $ in our
considerations of reflexivity of $C_{f,g}$.

\begin{proposition}
Let $g:\left(1,\infty \right) \rightarrow \left(1,\infty \right) $ be
injective continuous and such that%
\begin{equation}
1<g\left( x\right) <x\text{, \ \ \ \ \ \ }x>1.  \tag{4}
\end{equation}%
Then there is no continuous function$\ f:\left(1,\infty \right) \rightarrow
\left(1,\infty \right) $ satisfying equation
\begin{equation}
f\left( g\left( x\right) \right) =f\left( x\right)g\left( x\right) \text{, \
\ \ \ \ }x\in \left( 1,\infty \right) ;  \tag{5}
\end{equation}%
in particular, there is no injective continuous function $f:\left( 1,\infty
\right) \rightarrow \left( 1,\infty \right) $ such that $C_{f,g}$ is
reflexive in $(1,\infty )$.
\end{proposition}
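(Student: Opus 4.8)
The plan is to argue by contradiction, iterating the functional equation (5) \emph{backward} (that is, along $g^{-1}$) rather than forward. Suppose, contrary to the assertion, that some $f:(1,\infty)\to(1,\infty)$ satisfies (5). Recall that throughout this section $g$ is a bijection of $(1,\infty)$, and that, by the preceding lemma (continuity and injectivity force strict monotonicity, while (4) excludes the decreasing case), $g$ is increasing. Hence its inverse $g^{-1}:(1,\infty)\to(1,\infty)$ is a well-defined increasing bijection, and substituting $x=g^{-1}(t)$ into (4) gives
\[
 g^{-1}(t)>t,\qquad t>1 .
\]

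Now fix $x_{0}>1$ and set $x_{k}:=g^{-k}(x_{0})$ for $k=0,1,2,\dots$. Since $g^{-1}(t)>t$, the sequence $(x_{k})$ is strictly increasing, so $x_{k}\ge x_{0}>1$ for every $k$. Writing (5) with $x=x_{k+1}$ and using $g(x_{k+1})=x_{k}$ yields $f(x_{k})=f(x_{k+1})\,x_{k}$, i.e.
\[
 f(x_{k+1})=\frac{f(x_{k})}{x_{k}},\qquad k\ge 0 .
\]
Telescoping this recursion gives
\[
 f(x_{n})=\frac{f(x_{0})}{\prod_{k=0}^{n-1}x_{k}},\qquad n\ge 1 .
\]

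Because each factor satisfies $x_{k}\ge x_{0}>1$, we have $\prod_{k=0}^{n-1}x_{k}\ge x_{0}^{\,n}\to\infty$, whence $f(x_{n})\to 0$ as $n\to\infty$. This contradicts $f(x_{n})>1$, and therefore no such $f$ can exist. The final clause is then immediate: reflexivity of $C_{f,g}$ is precisely equation (5) (obtained from (3) exactly as above), so any injective continuous $f$ making $C_{f,g}$ reflexive would be a solution of (5), which we have just excluded. Note that continuity of $f$ was never used; only the inequality $f>1$ matters, so the statement holds for \emph{every} $f$ with values in $(1,\infty)$.

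The one genuinely delicate point is the \emph{direction} of the iteration together with the essential use of surjectivity of $g$. Iterating (5) forward produces the orbit $g^{n}(x_{0})\downarrow 1$, along which $f$ is forced to increase; this is harmless, since $f$ is merely required to be positive and may grow without bound near the endpoint $1$, so no contradiction arises from the forward orbit. The contradiction is visible only along the backward orbit, which escapes to $+\infty$ and forces $f$ below $1$. For that orbit to be defined for \emph{all} $n$ one must use that $g$ maps \emph{onto} $(1,\infty)$, not merely injectively; indeed, if $g$ were only injective with proper image, the backward orbit would terminate and the conclusion could fail, so surjectivity of $g$ is exactly where the standing hypotheses of the section enter.
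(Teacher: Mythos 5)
Your backward-iteration argument is internally coherent, but it proves a different statement from the one claimed: everything after ``its inverse $g^{-1}:(1,\infty)\to(1,\infty)$ is a well-defined increasing bijection'' uses that $g$ is \emph{onto}, and surjectivity is not among the hypotheses of this Proposition. The Proposition assumes only that $g$ is injective, continuous and satisfies (4), and this weakening is deliberate: contrast it with Proposition 2 (the case $g(x)>x$), which explicitly says ``bijective''. Your appeal to the ``standing hypotheses of the section'' does not close this hole, because the first assertion of the Proposition is a self-contained claim about equation (5), and (4) is perfectly compatible with $g$ having a proper image --- e.g.\ $g(x)=2-1/x$, whose image is $(1,2)$. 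For such a $g$ the backward orbit $x_k=g^{-k}(x_0)$ terminates after finitely many steps: it is increasing, and if it were defined for all $k$ it would be bounded above by $\sup g$ and would converge to a fixed point of $g$, which (4) forbids. So under the stated hypotheses your recursion simply stops before producing a contradiction. Moreover, the gap is not repairable within your approach: for $g(x)=2-1/x$ one can actually construct a continuous $f:(1,\infty)\to(1,\infty)$ satisfying (5), by prescribing $f$ continuous with $f>1$ on the fundamental domain $[2,\infty)$ subject to the matching condition $f(2)=2\lim_{z\to\infty}f(z)$ (e.g.\ $f(z)=2+4/z$) and extending to $(1,2)$ by iterating $f(g(x))=f(x)g(x)$. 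Such an $f$ is necessarily non-monotone, so this does not collide with the paper's argument, which is carried out for strictly increasing $f$; but it shows that with merely injective $g$ your conclusion genuinely fails, exactly as you yourself suspected in your closing remark.

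The paper's proof goes in the opposite direction and thereby never needs surjectivity: it iterates (5) \emph{forward}, obtaining $f(g^n(x))=f(x)\prod_{k=1}^{n}g^k(x)$ with $g^n(x)\downarrow 1$ (forward iterates always exist), and gets the contradiction from the existence of the finite one-sided limit $f(1+)$, which it extracts from monotonicity of $f$. So the two arguments trade hypotheses: the paper needs $f$ monotone but only injectivity of $g$; you need $g$ onto but nothing about $f$ beyond $f>1$ (as you correctly note, not even continuity). Where both sets of hypotheses hold --- in particular for the ``in particular'' clause, where $f$ and $g$ must be bijections for $C_{f,g}$ to be defined --- your argument is actually the stronger and cleaner one, since it also disposes of decreasing bijections $f$, which the paper's proof as written does not address. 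But as a proof of the Proposition as stated, the unlicensed use of surjectivity of $g$ is a genuine gap.
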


\begin{proof}
The continuity of $g$ and condition (4) imply that%
\begin{equation*}
\lim_{n\rightarrow \infty }g^{n}\left( x\right) =1\text{, \ \ \ \ \ }x>1,
\end{equation*}%
where $g^{n}$ denotes the $n$th iterate of $g$.

Assume that there is a continuous and strictly increasing function $f:\left(
1,\infty \right) \rightarrow \left( 1,\infty \right) $ satisfying (5).\ From
(5), by induction we get%
\begin{equation*}
f\left( g^{n}\left( x\right) \right) =f\left( x\right)
\prod_{k=1}^{n}g^{k}\left( x\right) \text{, \ \ \ \ \ \ \ }x\in \left(
1,\infty \right) \text{, \ }n\in \mathbb{N}\text{.}
\end{equation*}%
Since $f$ is nonnegative and increasing, it has a finite right-hand side
limit at $1$, denoted by $f\left( 1+\right) $. Letting here $n\rightarrow
\infty $, we obtain%
\begin{equation*}
f\left( 1+\right) =f\left( x\right) \prod_{k=1}^{\infty }g^{k}\left(
x\right) \text{, \ \ \ \ \ \ \ }x\in \left( 1,\infty \right) ,
\end{equation*}%
that is a contradiction, as the left side is real constant and the right
side is either strictly increasing or $\infty $.
\end{proof}

\begin{proposition}
Let $g:\left( 1,\infty \right) \rightarrow \left( 1,\infty \right) $ be
bijective, continuous and such that%
\begin{equation}
g\left( x\right) >x\text{, \ \ \ \ \ \ }x>1.  \tag{6}
\end{equation}%
Then the following conditions are equivalent:

(i) there is a continuous function $f:\left( 1,\infty \right) \rightarrow
\left( 1,\infty \right) $ such that $C_{f,g}$ is reflexive; \

(ii) there is a continuous function$\ f:\left( 1,\infty \right) \rightarrow
\left( 1,\infty \right) $ satisfying (5):
\begin{equation*}
f\left( g\left( x\right) \right) =f\left( x\right) g\left( x\right) ,\ \ \ \
\ x\in \left( 1,\infty \right) ;
\end{equation*}

(iii) there are a function $f : \left( 1,\infty \right) \rightarrow
\left( 1,\infty \right)$ and $c\geq 1$ such that%
\begin{equation}
f\left( x\right) =c\prod_{k=0}^{\infty }g^{-k}\left( x\right) \,,\text{ \ \
\ }x\in \left( 1,\infty \right) ,  \tag{7}
\end{equation}%
where $g^{-k}$ denotes the $k$th iterate of the function $g^{-1}$.
\end{proposition}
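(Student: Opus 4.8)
The plan is to establish the cyclic chain $(i)\Rightarrow(ii)\Rightarrow(iii)\Rightarrow(i)$. The implication $(i)\Rightarrow(ii)$ is immediate: since $f$ and $g$ are bijective, so is $f\circ g$, and applying $f\circ g$ to the reflexivity identity $(f\circ g)^{-1}(f(x)g(x))=x$ yields $f(g(x))=f(x)g(x)$, which is precisely (5); the same $f$ being continuous, (ii) holds. Before the substantive directions I would record the elementary facts about $g$: being a continuous bijection of $(1,\infty)$ with $g(x)>x$, it is strictly increasing (a decreasing bijection of $(1,\infty)$ onto itself would have a fixpoint, contradicting $g(x)>x$), so $g^{-1}(x)<x$ for $x>1$ and the decreasing sequence $g^{-n}(x)$ converges to a fixpoint of $g^{-1}$, forcing $\lim_{n\to\infty}g^{-n}(x)=1$.

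For $(ii)\Rightarrow(iii)$, I would start from a continuous $f\colon(1,\infty)\to(1,\infty)$ satisfying (5) and substitute $x\mapsto g^{-1}(x)$ to obtain $f(g^{-1}(x))=f(x)/x$; iterating gives, by induction, $f(g^{-n}(x))=f(x)\big/\prod_{k=0}^{n-1}g^{-k}(x)$ for all $n$. Setting $P_n(x):=\prod_{k=0}^{n-1}g^{-k}(x)=f(x)/f(g^{-n}(x))$, each factor $g^{-k}(x)>1$ makes $P_n(x)$ increasing in $n$, while $f>1$ gives $f(g^{-n}(x))>1$ and hence the a priori bound $P_n(x)<f(x)$. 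A bounded increasing sequence converges, so $\prod_{k=0}^{\infty}g^{-k}(x)$ converges for every $x>1$; taking this product as the function and $c=1$ establishes (iii). Note that this direction uses only $f>1$ together with (5).

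For $(iii)\Rightarrow(i)$, convergence of the product lets me define $\tilde f(x):=\prod_{k=0}^{\infty}g^{-k}(x)$, and I would verify it is a continuous bijection of $(1,\infty)$ satisfying (5). Continuity follows from the Weierstrass $M$-test applied to $\log\tilde f(x)=\sum_{k\ge 0}\log g^{-k}(x)$: on a compact $[a,b]$ monotonicity of each $g^{-k}$ gives $0\le \log g^{-k}(x)\le \log g^{-k}(b)$ with $\sum_k\log g^{-k}(b)=\log\tilde f(b)<\infty$, so the series converges uniformly and $\tilde f$ is continuous. Strict monotonicity of $\tilde f$ comes from the factor $g^{0}(x)=x$ being strictly increasing and all remaining factors increasing and positive; dominated convergence for the same series yields $\tilde f(1^+)=1$, while $\tilde f(x)\ge x\to\infty$, so $\tilde f$ is an increasing continuous bijection of $(1,\infty)$. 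The functional equation is then checked by the index shift $g^{-k}(g(x))=g^{-(k-1)}(x)$ for $k\ge 1$, which gives $\tilde f(g(x))=g(x)\prod_{j\ge 0}g^{-j}(x)=g(x)\tilde f(x)$, i.e. (5). Since $\tilde f$ and $g$ are bijective, $C_{\tilde f,g}$ is well defined and (5) is equivalent to its reflexivity, so (i) holds.

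The main obstacle I anticipate is the analytic control of the infinite product of iterates: deducing its pointwise convergence in $(ii)\Rightarrow(iii)$ from the bound $P_n(x)<f(x)$ (which hinges on $f>1$), and then upgrading this to convergence uniform on compact subsets in $(iii)\Rightarrow(i)$ so as to secure both continuity of $\tilde f$ and the boundary value $\tilde f(1^+)=1$ required for bijectivity. The remaining steps are routine algebra with the iterates of $g$.
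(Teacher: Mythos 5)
Your proof is correct and follows the same skeleton as the paper's: the cyclic chain $(i)\Rightarrow(ii)\Rightarrow(iii)\Rightarrow(i)$, the substitution $x\mapsto g^{-1}(x)$ with induction giving $f\left(g^{-n}(x)\right)=f(x)\big/\prod_{k=0}^{n-1}g^{-k}(x)$, and the index-shift computation verifying (5). The differences are local but genuinely useful. In $(ii)\Rightarrow(iii)$ the paper simply asserts that $c:=f(1+)\geq 1$ exists and lets $n\to\infty$, thereby concluding that the original $f$ itself equals $c\prod_{k=0}^{\infty}g^{-k}$; for a function $f$ that is only assumed continuous, the existence of $f(1+)$ is not actually justified there, and your monotone-boundedness argument ($P_n(x)$ increasing in $n$ and $P_n(x)<f(x)$ because $f>1$) repairs this cleanly, at the cost of concluding only that \emph{some} function of the form (7) exists (the product itself, with $c=1$) rather than that the given $f$ has that form. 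That weaker conclusion is all that (iii) literally requires, though the paper's stronger version (every continuous solution of (5) is $c\prod g^{-k}$ with $c=f(1+)$) is what gets invoked later in Theorem 3. In $(iii)\Rightarrow(i)$ the paper does the index shift and stops, leaving implicit why $C_{f,g}$ is even well defined; you additionally verify, via the M-test on $\sum_{k}\log g^{-k}$ and the boundary limits, that the product is a continuous strictly increasing bijection of $(1,\infty)$, which is exactly what definition (2) needs and what the continuity demand in (i) asks for. Net effect: same route, with your write-up more careful on the analytic details, and the paper's yielding a stronger structural statement about all solutions of (5).
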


\begin{proof}
The implication $(i)\Longrightarrow (ii)$ is obvious.

Assume (ii)$.$ The assumptions on $g$ imply that $g^{-1},$ the inverse of $%
g, $ is continuous, strictly increasing, and, in view of (6),
\begin{equation*}
1<g^{-1}\left( x\right) <x\text{, \ \ \ \ \ \ }x>1.
\end{equation*}%
Consequently,
\begin{equation*}
\lim_{n\rightarrow \infty }g^{-n}\left( x\right) =1,\text{ \ \ \ \ \ \ }%
x>1\,,
\end{equation*}%
where $g^{-n}$ stands for the $n$th iterate of $g^{-1}\,.$

Assume that there is a continuous function $f$ such that equality (5) holds.
Replacing $x$ by $g^{-1}\left( x\right)$ in (5) we get%
\begin{equation*}
f\left( g^{-1}\left( x\right) \right) =\frac{f\left( x\right)}{x}\text{, \ \
\ \ \ }x\in \left( 1,\infty \right) ,
\end{equation*}%
whence, by induction,
\begin{equation*}
f\left( g^{-n}\left( x\right) \right) =\frac{f\left( x\right)} {%
\prod_{k=0}^{n-1}g^{-k}\left( x\right)} \text{, \ \ \ \ \ \ \ }x\in \left(
1,\infty \right) ,\text{ }n\in \mathbb{N}\text{.}
\end{equation*}%
Since $c:=f\left( 1+\right) \geq 1$ exists, letting here $n\rightarrow
\infty ,$ we obtain
\begin{equation*}
c=\frac{f\left( x\right)}{\prod_{k=0}^{\infty }g^{-k}\left( x\right)} \,,%
\text{ \ \ \ }x\in \left( 1,\infty \right) ,
\end{equation*}%
whence%
\begin{equation*}
f\left( x\right) =c\prod_{k=0}^{\infty }g^{-k}\left( x\right) \,,\text{ \ \
\ }x\in \left( 1,\infty \right) ,
\end{equation*}%
where the infinite product $\prod_{k=0}^{\infty }g^{-k}$ converges, and its
product is a continuous function, which proves that (iii) holds.

Assume (iii). If $f:\left( 1,\infty \right) \rightarrow \left( 1,\infty
\right) ~$is of the form (7) then, for every constant $c\geq 1$ and $x\in
\left( 1,\infty \right) $\ we have%
\begin{eqnarray*}
f\left( g\left( x\right) \right) &=&c\prod_{k=0}^{\infty }g^{-k}\left(
g\left( x\right) \right) =cg\left( x\right)\prod_{k=1}^{\infty }g^{-k}\left(
g\left( x\right) \right) \\
&=&\left( c\prod_{k=1}^{\infty }g^{-k}\left( g^{1}\left( x\right) \right)
\right)g\left( x\right) =\left( c\prod_{k=0}^{\infty }g^{-k}\left( x\right)
\right)g\left( x\right) \\
&=&f\left( x\right)g\left( x\right) .
\end{eqnarray*}%
which implies that $C_{f,g}$ is reflexive, so (i) holds.
\end{proof}

Now we prove the main result of this section.

\begin{theorem}
Let $f,g:\left( 1,\infty \right) \rightarrow \left( 1,\infty \right) $ be
continuous, strictly increasing and onto. The following conditions are
equivalent:

(i) \ \ the function $C_{f,g}$ is a bivariable strict mean in $\left(
1,\infty \right) ;$

(ii) the function $C_{f,g}$ is reflexive; \ \

(iii) the infinite product $\prod_{k=0}^{\infty }g^{-k}$ of iterates of $%
g^{-1}\,\ $converges uniformly on compact subsets of $\left( 1,\infty
\right) $ and,
\begin{equation}
f=\prod_{k=0}^{\infty }g^{-k};  \tag{8}
\end{equation}%
moreover $C_{f,g}=\mathcal{C}_{g},$ where $\mathcal{C}_{g}:\left( 1,\infty
\right) ^{2}\rightarrow \left( 1,\infty \right) $ defined by
\begin{equation}
\mathcal{C}_{g}\left( x,y\right) =\left( \prod_{k=0}^{\infty
}g^{-k+1}\right) ^{-1}\left( \prod_{k=0}^{\infty }g^{-k}\left(
x\right)g\left( y\right) \right) \text{, \ \ \ \ }x,y>1,  \tag{9}
\end{equation}%
is a strictly increasing bivariable mean in $\left( 1,\infty \right) $.
\end{theorem}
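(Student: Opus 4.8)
The plan is to prove the cycle $(i)\Rightarrow(ii)\Rightarrow(iii)\Rightarrow(i)$ and then verify the ``moreover'' clause by a direct computation. The implication $(i)\Rightarrow(ii)$ is immediate from Remark 2(i), since every mean is reflexive.

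For $(ii)\Rightarrow(iii)$ I would first observe that reflexivity of $C_{f,g}$ is precisely equation (5), $f(g(x))=f(x)g(x)$. By the fixpoint part of the preceding Lemma, any fixpoint of $g$ must equal $1$, so the bijection $g$ of $(1,\infty)$ has no fixpoint there; continuity of $g$ then forces $g(x)-x$ to keep a constant sign, so that either $1<g(x)<x$ for all $x>1$ or $g(x)>x$ for all $x>1$. The first alternative is excluded by the first Proposition, which rules out any continuous $f$ satisfying (5) in that regime; as our $f$ is continuous and satisfies (5), only the case $g(x)>x$ survives. The second Proposition then applies and gives $f=c\prod_{k=0}^{\infty}g^{-k}$ with $c=f(1+)\ge 1$. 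Since $f$ is continuous, strictly increasing and onto $(1,\infty)$, its infimum $f(1+)$ equals $1$, whence $c=1$ and (8) holds. Finally, the partial products $\prod_{k=0}^{n}g^{-k}$ are continuous and increase with $n$ (each new factor $g^{-(n+1)}$ exceeds $1$ on $(1,\infty)$) and converge pointwise to the continuous function $f$; Dini's theorem upgrades this to uniform convergence on every compact subset of $(1,\infty)$.

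For $(iii)\Rightarrow(i)$ I would feed (8), i.e. $c=1$, into the implication $(iii)\Rightarrow(i)$ of the second Proposition to conclude that $C_{f,g}$ is reflexive, and then note that $C_{f,g}(x,y)=(f\circ g)^{-1}(f(x)g(y))$ is strictly increasing in each variable: $f\circ g$ is a strictly increasing bijection, so $(f\circ g)^{-1}$ is strictly increasing, while $f(x)g(y)$ is strictly increasing in $x$ and in $y$. Reflexivity together with strict monotonicity in each variable yields, by Remark 2(ii), that $C_{f,g}$ is a strict mean.

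For the ``moreover'' clause, using (8) I would compute $f\circ g=\prod_{k=0}^{\infty}g^{-k+1}=g\cdot f$ --- which is just (5) rewritten --- so that $(f\circ g)^{-1}=\bigl(\prod_{k=0}^{\infty}g^{-k+1}\bigr)^{-1}$; substituting into (2) reproduces formula (9) verbatim, giving $C_{f,g}=\mathcal{C}_g$, which is therefore a strictly increasing bivariable strict mean. I expect the only genuinely new technical content to lie in the $(ii)\Rightarrow(iii)$ step, namely pinning down $c=1$ from the surjectivity of $f$ and justifying the uniform convergence on compacta via Dini; everything else is bookkeeping with the Remarks and the two Propositions already established.
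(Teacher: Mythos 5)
Your proposal is correct and follows essentially the same route as the paper: $(i)\Rightarrow(ii)$ via reflexivity of means, $(ii)\Rightarrow(iii)$ by combining Propositions 1 and 2 (with $c=f(1+)=1$ forced by surjectivity of $f$, and Dini's theorem for uniform convergence on compacta), and $(iii)\Rightarrow(i)$ via reflexivity plus strict monotonicity in each variable and Remark 2(ii). The only difference is that you spell out the details the paper compresses --- in particular the dichotomy $1<g(x)<x$ versus $g(x)>x$ from the Lemma and the exclusion of the first case by Proposition 1 --- which is exactly how the paper's citation of ``Propositions 1 and 2'' is meant to be read.
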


\begin{proof}
It is obvious that (i) implies (ii). Assume that (ii) holds. Then, applying
Propositions 1 and 2 and taking into account that $c:=f\left( 1+\right) =1$
(by the bijectivity of $f$) we get the first part of (iii); in particular
the uniform convergence on compact subsets of the infinite product of
iterates follows from (7) and the Dini theorem. The "moreover" result
follows immediately from (8) and the definition of $C_{f,g}$. Thus (ii)
implies (iii).

To prove (i) assuming (iii), note that the function $\mathcal{C}_{g}$ defined by (9) is
reflexive and strictly increasing in each variable so, by Remark 2(ii), it
is a strict mean.
\end{proof}

From the above theorem we obtain the following

\begin{corollary}
Let a continuous strictly increasing function $r:\left( 1,\infty \right)
\rightarrow \left( 1,\infty \right) $ be such that%
\begin{equation*}
1<r\left( x\right) <x\text{, \ \ \ \ }x>1,
\end{equation*}%
and the infinite product
\begin{equation*}
\prod_{k=0}^{\infty }r^{k}
\end{equation*}%
converges to a finite continuous function. Then the function $\mathcal{C}%
_{r}:\left( 1,\infty \right) ^{2}\rightarrow \left( 1,\infty \right) $
defined by
\begin{equation*}
\mathcal{C}_{r}\left( x,y\right) :=\left( \prod_{k=0}^{\infty
}r^{k-1}\right) ^{-1}\left( \prod_{k=0}^{\infty }r^{k}\left(
x\right)r^{-1}\left( y\right) \right) \text{, \ \ \ \ }x,y>1
\end{equation*}%
is a bivariable strict mean in $\left( 1,\infty \right) $.
\end{corollary}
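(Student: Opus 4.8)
The plan is to reduce the statement to Theorem 3 by the substitution $g:=r^{-1}$. Under the hypotheses on $r$ its inverse $g=r^{-1}$ is again continuous and strictly increasing on $(1,\infty)$, and applying the increasing map $r^{-1}$ to the inequality $r(x)<x$ gives $x<r^{-1}(x)=g(x)$, so that $g(x)>x$ for $x>1$; this places us in the setting of Proposition 2 (condition (6)) and hence of Theorem 3. First I would record that $1<r(x)<x$ forces $\lim_{x\to1^{+}}r(x)=1$, so that $g$ is a genuine self-map of $(1,\infty)$. The one point deserving care is that $r$ be onto $(1,\infty)$, equivalently that $g$ be a bijection, so that $r^{-1}$ and the outer inverse occurring in the definition of $\mathcal C_{r}$ are globally defined. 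This is where the behaviour of $r$ near $\infty$ enters, and is the step I expect to be the main obstacle: the convergence of $\prod_{k=0}^{\infty}r^{k}$ only constrains the rate at which the iterates approach the fixpoint $1$ and says nothing about the upper end, so surjectivity of $r$ has to be extracted from the standing assumptions rather than from the product alone.

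Next I would check that the convergence hypothesis on $r$ is exactly the one demanded by Theorem 3(iii). Since $g^{-k}$, the $k$th iterate of $g^{-1}$, coincides with $r^{k}$, the given product $\prod_{k=0}^{\infty}r^{k}$ is literally $\prod_{k=0}^{\infty}g^{-k}$. Its partial products $\prod_{k=0}^{n}r^{k}$ are continuous, increase in $n$ (each factor satisfies $r^{k}(x)>1$), and converge pointwise to the finite continuous function assumed in the statement; applying Dini's theorem to the partial sums $\sum_{k=0}^{n}\log r^{k}$ upgrades this to uniform convergence on compact subsets of $(1,\infty)$, matching the first part of Theorem 3(iii).

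With these verifications in hand, Theorem 3 applies with $f:=\prod_{k=0}^{\infty}g^{-k}=\prod_{k=0}^{\infty}r^{k}$ and yields that $\mathcal C_{g}$, defined by (9), is a strictly increasing bivariable strict mean in $(1,\infty)$. It then remains only to match formulas: substituting $g=r^{-1}$ into (9) and using $g^{-k}=r^{k}$, $g^{-k+1}=r^{k-1}$ and $g=r^{-1}$ turns (9) into
\[
\mathcal C_{g}(x,y)=\Bigl(\prod_{k=0}^{\infty}r^{k-1}\Bigr)^{-1}\Bigl(\prod_{k=0}^{\infty}r^{k}(x)\,r^{-1}(y)\Bigr)=\mathcal C_{r}(x,y),
\]
so that $\mathcal C_{r}=\mathcal C_{g}$ is the asserted strict mean.

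As an independent check, and an alternative route bypassing the index bookkeeping of Theorem 3, I would verify reflexivity and strict monotonicity of $\mathcal C_{r}$ directly and invoke Remark 2(ii). Writing $P:=\prod_{k=0}^{\infty}r^{k-1}=r^{-1}\cdot\prod_{k=0}^{\infty}r^{k}$, one has on the diagonal $\prod_{k=0}^{\infty}r^{k}(x)\,r^{-1}(x)=P(x)$, whence $\mathcal C_{r}(x,x)=P^{-1}(P(x))=x$; moreover $\prod_{k=0}^{\infty}r^{k}$ and $r^{-1}$ are strictly increasing and $P^{-1}$ is increasing, so $\mathcal C_{r}$ is strictly increasing in each variable, and Remark 2(ii) gives that it is a strict mean.
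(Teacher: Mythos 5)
Your proposal is correct and takes essentially the same route as the paper, whose entire proof is the one line ``apply Theorem 3 with $g:=r^{-1}$, $f:=\prod_{k=0}^{\infty}r^{k}$ and observe that $\mathcal{C}_{r}=C_{f,g}$''; your verification of condition (6), the Dini argument for uniform convergence, and the formula matching are precisely the details the paper leaves implicit, and your closing direct check of reflexivity and strict monotonicity merely reproduces the (iii)$\Rightarrow$(i) step already contained in Theorem 3. One clarification on the point you flagged as ``the main obstacle'': surjectivity of $r$ genuinely cannot be extracted from the stated hypotheses. For example, $r(x)=\frac{2x}{x+1}$ is continuous, strictly increasing, satisfies $1<r(x)<x$, and obeys $r(y)-1=\frac{y-1}{y+1}<\frac{y-1}{2}$, so $r^{k}(x)-1<2^{-k}(x-1)$ and $\prod_{k=0}^{\infty}r^{k}$ converges uniformly on compact subsets of $(1,\infty)$ to a finite continuous function; yet $r$ maps $(1,\infty)$ onto $(1,2)$. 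The resolution is that surjectivity is not something the proof must supply but something the statement already presupposes: the defining formula for $\mathcal{C}_{r}$ evaluates $r^{-1}$ at every $y>1$ (and inverts $\prod_{k=0}^{\infty}r^{k-1}$, which contains the factor $r^{-1}$), so $\mathcal{C}_{r}$ is well defined on all of $(1,\infty)^{2}$ only when $r$ is onto. Reading surjectivity as an implicit hypothesis --- which is how the paper silently treats it --- your argument is complete; just replace your expectation of ``extracting'' it by this observation about well-posedness.
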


\begin{proof}
It is enough to apply the previous result with $g:=r^{-1}$, \ $%
f:=\prod_{k=0}^{\infty }r^{k}$ and observe that $\mathcal{C}_{r}=C_{f,g}$.
\end{proof}

It seems to be interesting that the mean $\mathcal{C}_{r}$ is generated with
the aid of the iterates of the function $r$ satisfying the assumptions of
Corollary 1.

\bigskip The mean $\mathcal{C}_{r}$ is constructed with the aid of the
iterates of $r$, for convenience, we introduce the following

\begin{definition}
If $r:\left( 1,\infty \right) \rightarrow \left( 1,\infty \right) $
satisfies the assumptions of Corollary 1, then the function $\mathcal{C}_{r}$
is called a product iterative mean of generator $r$.
\end{definition}

\begin{example}
Applying the definition of $\mathcal{C}_{r}$ for the generator $r:\left(
1,\infty \right) \rightarrow \left( 1,\infty \right) $ given by $r\left(
x\right) =x^{w}$, where $w\in \left( 0,1\right) $ is fixed, we get weighted
geometric mean $\mathcal{C}_{r}\left( x,y\right) =$ $x^{w}y^{1-w}$ \ for all
$x,y\in \left( 1,\infty \right) .$ \ \ \
\end{example}

%\begin{example}
%Let $p\in \left( 0,1\right] $. The function $r:\left( 0,\infty \right)
%\rightarrow \left( 0,\infty \right) $ given by
%\begin{equation*}
%r\left( x\right) =\frac{px^{2}}{x+1}\text{, \ \ \ \ }x>0,
%\end{equation*}%
%is strictly increasing, and we have
%\begin{equation*}
%0<r\left( x\right) <px\text{, \ \ \ \ }x>0.
%\end{equation*}%
%It follows that in the case when $p<1$, the series $\sum_{k=0}^{\infty
%}r^{k} $ converges uniformly on compact sets, and, consequently, the
%function $r$ generates the iterative mean $\mathcal{D}_{r}$.\
%\end{example}

\begin{remark}
In Theorem 2 we assume that $I=\left( 1,\infty \right) .$ One could also get
the suitable results if $I$ is one the following intervals $\left[ 1,\infty
\right), \left( 0,1\right), \left( 0,1\right]$, and $\left( 0,\infty \right)$%
.
\end{remark}

\bigskip

\section{Invariant operation with respect to iterative mean-type mapping
need not be a mean}

In this section we focus our attention on the question whether the invariant
function $C_{f\circ g,g\circ h}$ occurring in the following counterpart of
(1):%
\begin{equation*}
C_{f\circ g,g\circ h}\circ \left( C_{f,g},C_{g,h}\right) =C_{f\circ g,g\circ
h},
\end{equation*}%
is a mean, if the coordinates of the mapping $\left( C_{f,g},C_{g,h}\right) $
defined by (2) are means.

If $C_{f,g}=\mathcal{C}_{g}$ and $C_{g,h}=\mathcal{C}_{h},$ then, in view of \cite[Theorem 1]{Matko18}, the function $C_{f\circ g,g\circ h}$\ is invariant with respect
to the mean type mapping $\left( \mathcal{C}_{g},\mathcal{C}_{h}\right) $.

On the other hand $\mathcal{C}_{g}$ (and $\mathcal{C}_{h}$) is a very
special case of generalized weighted quasi-geometric mean $G_{\varphi ,\psi
}:I^{2}\rightarrow I$,%
\begin{equation*}
G_{\varphi ,\psi }\left( x,y\right) =\left( \varphi \psi \right) ^{-1}\left(
\varphi \left( x\right) \psi \left( y\right) \right) \text{, \ \ \ \ }x,y\in
I\text{,}
\end{equation*}%
where $\varphi ,\psi :I\rightarrow (0,\infty )$. Indeed, with $\varphi
=\prod_{k=0}^{\infty }g^{-k}$ and $\psi =g$ we have $G_{\varphi ,\psi }=%
\mathcal{C}_{g}$ (and with $\psi =\prod_{k=0}^{\infty }h^{-k}$ and $\gamma
=h $ we have $G_{\psi ,\gamma }=\mathcal{C}_{h}$). Moreover, according to
\cite{Matko13} and Remark \ref{rem:bij}, if $G_{\varphi, \psi}$ and $G_{\psi
,\gamma }$ are two generalized weighted quasi-geometric means, then $%
G_{\varphi \psi ,\psi \gamma },$ the mean of the same type, is a unique mean
that is invariant with respect to the mean-type mapping $\left(
G_{\varphi,\psi },G_{\psi ,\gamma }\right) :I^{2}\rightarrow I^{2}$.

In this context the question arises whether the invariant function $%
C_{f\circ g,g\circ h}$ coincides with the invariant mean $G_{\varphi \psi
,\psi \gamma }?$ In view of Remark \ref{rem:bij} and (\cite{Matko13}), the
answer is positive, if $C_{f\circ g,g\circ h}$ is a mean, i.e. if there is a
suitable bijective function $u:\left( 1,\infty \right) \rightarrow \left(
1,\infty \right) $ such that $C_{f\circ g,g\circ h}=\mathcal{C}_{u}$.

We prove

\begin{theorem}
Assume that $f,g,h : (1,\infty) \to (1,\infty)$ are continuous, strictly increasing, and onto. Then $C_{f,g}$, $C_{g,h}$, and
$C_{f\circ g,g\circ h}$\ are means in $\left( 1,\infty \right) , $ iff
\begin{equation}
g=\prod_{i=0}^{\infty }h^{-i}\text{, \ \ \ \ \ \ \ \ \ }f=\prod_{j=0}^{%
\infty }\left( \prod_{i=0}^{\infty }h^{-i}\right) ^{-j}\text{, }  \tag{10}
\end{equation}%
and $h$ satisfies the composite functional equation
\begin{equation}
\text{\ } \prod_{j=0}^{\infty }\left( \prod_{i=0}^{\infty }h^{-i}\right)
^{-j+1} =\prod_{k=0}^{\infty }\left( \prod_{i=0}^{\infty }h^{-i+1}\right)
^{-k},  \tag{11}
\end{equation}%
where $i,j,k$ stand for the indices of iterates of the suitable functions.
\end{theorem}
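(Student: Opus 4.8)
The plan is to obtain the statement as a threefold application of Theorem~3, reducing everything to a single bookkeeping identity on iterates. The starting observation is structural: if $f,g,h$ are continuous, strictly increasing, and onto self-maps of $(1,\infty)$, then so are the composites $g\circ h$ and $f\circ g$, so Theorem~3 applies verbatim to each of the three generator pairs $(f,g)$, $(g,h)$, and $(f\circ g,\,g\circ h)$. The expanding condition (6) that is forced on the second generator of any reflexive $C_{\cdot,\cdot}$ is automatically consistent here, since $g(x)>x$ and $h(x)>x$ give $(g\circ h)(x)=g(h(x))>h(x)>x$.

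For the forward implication, suppose $C_{f,g}$, $C_{g,h}$, and $C_{f\circ g,g\circ h}$ are all means. The equivalence (i)$\Leftrightarrow$(iii) of Theorem~3 applied to $(g,h)$ gives convergence of $\prod_{i=0}^{\infty}h^{-i}$ together with $g=\prod_{i=0}^{\infty}h^{-i}$, which is the first formula in (10); applied to $(f,g)$ it gives $f=\prod_{j=0}^{\infty}g^{-j}$, and substituting the expression just found for $g$ turns this into the second formula in (10); applied to $(f\circ g,\,g\circ h)$ it gives the single remaining relation
\[
f\circ g=\prod_{k=0}^{\infty}\left(g\circ h\right)^{-k}.
\]
It thus only remains to recognise this relation as the composite equation (11).

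The recognition rests on the elementary iterate identity $\phi^{-m}\circ\phi=\phi^{-m+1}$, valid for any bijection $\phi$ and any integer $m$, where $\phi^{-m}$ denotes the $m$th iterate of $\phi^{-1}$. Taking $\phi=g$ and using $f=\prod_{j=0}^{\infty}g^{-j}$ we obtain termwise
\[
(f\circ g)(x)=\prod_{j=0}^{\infty}g^{-j}\bigl(g(x)\bigr)=\prod_{j=0}^{\infty}g^{-j+1}(x),
\]
which, after inserting $g=\prod_{i=0}^{\infty}h^{-i}$, is exactly the left-hand side of (11); taking $\phi=h$ gives $(g\circ h)(x)=\prod_{i=0}^{\infty}h^{-i}\bigl(h(x)\bigr)=\prod_{i=0}^{\infty}h^{-i+1}(x)$, so the right-hand side $\prod_{k=0}^{\infty}(g\circ h)^{-k}$ of the displayed relation becomes the right-hand side of (11). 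Hence, under (10), the displayed relation and equation (11) are literally identical, completing the forward direction. For the converse I would run the same computation in reverse: given (10), the (iii)$\Rightarrow$(i) direction of Theorem~3 makes $C_{f,g}$ and $C_{g,h}$ means, while the identity $(f\circ g)(x)=\prod_{j=0}^{\infty}g^{-j+1}(x)$ shows that (11) is precisely $f\circ g=\prod_{k=0}^{\infty}(g\circ h)^{-k}$, and the (iii)$\Rightarrow$(i) direction applied to $(f\circ g,\,g\circ h)$ then makes $C_{f\circ g,g\circ h}$ a mean.

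The points deserving care, and where I expect the only real work, are convergence bookkeeping and the index shifts. In the forward direction the uniform convergence on compacta demanded by condition (iii) is supplied by Theorem~3 itself; in the converse direction the product $\prod_{k=0}^{\infty}(g\circ h)^{-k}$ is forced to converge because (11) identifies it with the genuine continuous function $f\circ g$, and monotonicity of the partial products together with Dini's theorem upgrades this to uniform convergence on compacta exactly as in the proof of Theorem~3. The termwise relabeling $\prod_{j}g^{-j}(g(x))=\prod_{j}g^{-j+1}(x)$ is legitimate because every factor is positive and the product converges, so no rearrangement issue arises; stating the iterate identity $\phi^{-m}\circ\phi=\phi^{-m+1}$ explicitly is what keeps the numerous $-j+1$, $-i+1$, $-k$ index shifts transparent, this being the step most prone to off-by-one slips.
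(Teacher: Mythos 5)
Your proof is correct and takes essentially the same route as the paper: apply Theorem 3 to each of the three pairs $(f,g)$, $(g,h)$, $(f\circ g,\,g\circ h)$ to obtain $g=\prod_{i=0}^{\infty}h^{-i}$, $f=\prod_{j=0}^{\infty}g^{-j}$, $f\circ g=\prod_{k=0}^{\infty}(g\circ h)^{-k}$, and then use the index-shift identity $g^{-j}\circ g=g^{-j+1}$ (resp. $h^{-i}\circ h=h^{-i+1}$) to rewrite the third relation as equation (11). The only difference is that you spell out the Dini/uniform-convergence bookkeeping for the converse, which the paper simply declares obvious.
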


\begin{proof}
Assume that $C_{f,g}$, $C_{g,h}$ and $C_{f\circ g,g\circ h}$\ are means in $%
\left( 1,\infty \right) .$ In view of Theorem 3, we have $C_{g,h}=\mathcal{C}%
_{h}$, $C_{f,g}=\mathcal{C}_{g},$ $C_{f\circ g,g\circ h}=\mathcal{C}_{g\circ
h}$, and
\begin{equation}
g=\prod_{i=0}^{\infty }h^{-i}\text{, \ \ \ \ \ \ }f=\prod_{j=0}^{\infty
}g^{-j}\text{, \ \ \ \ \ \ \ }f\circ g=\prod_{k=0}^{\infty }\left( g\circ
h\right) ^{-k}.  \tag{12}
\end{equation}%
It follows that (10) holds true, and the third of equalities (12) implies
that $h$ satisfies the composite functional equation,
\begin{equation*}
\text{\ }\left( \prod_{j=0}^{\infty }\left( \prod_{i=0}^{\infty
}h^{-i}\right) ^{-j}\right) \circ \left( \prod_{i=0}^{\infty }h^{-i}\right)
=\prod_{k=0}^{\infty }\left( \left( \prod_{i=0}^{\infty }h^{-i}\right) \circ
h\right) ^{-k},
\end{equation*}%
which simplifies to (11). The converse implication is obvious.
\end{proof}

Thus our question leads to rather complicate composite functional equation
(11). We pose the following

\begin{problem}
Determine strictly increasing bijective functions $h:\left( 1,\infty \right)
\rightarrow \left( 1,\infty \right) $ satisfying equation (11).
\end{problem}

Let us consider the following

\begin{example}
Take $w\in \left( 0,1\right) ,$ and define $h:\left( 1,\infty \right)
\rightarrow \left( 1,\infty \right) $ by $h\left( x\right) =x^{\frac{1}{w}}$%
. From Theorem 3 with $g$ and $f$ replaced, respectively, by $h$ and $g,$ we
get%
\begin{equation*}
g\left( x\right) =\prod_{k=0}^{\infty }h^{-k}\left( x\right)
=\prod_{k=0}^{\infty }x^{w^{k}}=x^{\frac{1}{1-w}}\text{, \ \ \ \ \ }x\in
\left( 1,\infty \right) ,
\end{equation*}%
and, for all $x,y>1$,
\begin{eqnarray*}
\mathcal{C}_{h}\left( x,y\right)  &=&\left( \prod_{k=0}^{\infty
}h^{-k+1}\right) ^{-1}\left( \prod_{k=0}^{\infty }h^{-k}\left( x\right)
h\left( y\right) \right)  \\
&=&\left( x^{\frac{1}{1-w}}y^{\frac{1}{w}}\right) ^{w(1-w)}=x^{w}y^{1-w}.
\end{eqnarray*}%
Similarly, as $g^{-1}\left( x\right) =x^{1-w},$ we get
\begin{equation*}
f\left( x\right) =\prod_{k=0}^{\infty }g^{-k}\left( x\right)
=\prod_{k=0}^{\infty }x^{\left( 1-w\right) ^{k}}=x^{\frac{1}{w}}\text{, \ \
\ \ \ }x\in \left( 1,\infty \right) ,
\end{equation*}%
and, for all $x,y>1,$%
\begin{eqnarray*}
\mathcal{C}_{g}\left( x,y\right)  &=&\left( \prod_{k=0}^{\infty
}g^{-k+1}\right) ^{-1}\left( \prod_{k=0}^{\infty }g^{-k}\left( x\right)
g\left( y\right) \right)  \\
&=&\left( x^{\frac{1}{w}}y^{\frac{1}{1-w}}\right) ^{w(1-w)}=x^{1-w}y^{w}.
\end{eqnarray*}%
Moreover, since $\left( f\circ g\right) \left( x\right) =x^{\frac{1}{w\left(
1-w\right) }}$, \ $\left( g\circ h\right) \left( x\right) =x^{\frac{1}{%
\left( 1-w\right) w}}$, \ we get, for all $x,y>1,$%
\begin{eqnarray*}
C_{f\circ g,g\circ h}\left( x,y\right)  &=&\left( \left( f\circ g\right)
\circ \left( g\circ h\right) \right) ^{-1}\left( \left( f\circ g\right)
\left( x\right) \cdot \left( g\circ h\right) \left( y\right) \right)  \\
&=&\left( x^{\frac{1}{w(1-w)}}y^{\frac{1}{(1-w)w}}\right)
^{w^{2}(1-w)^{2}}=\left( xy\right) ^{w(1-w)},
\end{eqnarray*}%
{\LARGE \ }

whence
\begin{equation*}
C_{f\circ g,g\circ h}=\mathcal{G}^{2w(1-w)},
\end{equation*}%
where $\mathcal{G}\left( x,y\right) =\sqrt{xy}$ ($x,y>1$) is the geometric
mean.

For every $w\in \left( 0,1\right) $ we have $\mathcal{G}\circ \left(
\mathcal{C}_{g},\mathcal{C}_{h}\right) =\mathcal{G}\,,$ and $\mathcal{G}$ is
a unique $\left( \mathcal{C}_{g},\mathcal{C}_{h}\right) $-invariant mean.
The function $C_{f\circ g,g\circ h}$ is also invariant for every $w\in
\left( 0,1\right) $, but $C_{f\circ g,g\circ h}$ is not a mean for any $w\in
\left( 0,1\right) $.
\end{example}

In the context of the above discussion let us consider the following

\begin{remark}
Under conditions of Theorem 4, the bijective functions $f,g,h:\left(
1,\infty \right) \rightarrow \left( 1,\infty \right) $ are increasing.
Consequently, they are almost everywhere differentiable and $f\left(
1+\right) =g\left( 1+\right) =h\left( 1+\right) =1$. Assume additionally
that $f\left( 1\right) =g\left( 1\right) =h\left( 1\right) =1$ and that $%
f,g,h$ are differentiable at the point $1$. If the functions $C_{f,g}$, $%
C_{g,h}$ and $C_{f\circ g,g\circ h}$\ were means in $\left[ 1,\infty \right)
$ then, by their reflexivity, we would have%
\begin{equation*}
f\left( g\left( x\right) \right) =f\left( x\right) g\left( x\right) \text{,
\ }g\left( h\left( x\right) \right) =g\left( x\right) h\left( x\right) \text{%
, \ }f\left( g^{2}\left( h\left( x\right) \right) \right) =f\left( g\left(
x\right) \right) g\left( h\left( x\right) \right) \text{, \ }
\end{equation*}%
for all $x\geq 1$, where $g^{2}$ is the second iterate of $g$.
Differentiating both sides of each of these equalities at $x=1$ and then setting%
\begin{equation*}
a:=f^{\prime }\left( 1\right) ,\text{ \ \ \ \ }b:=g^{\prime }\left( 1\right)
\text{, \ \ \ \ \ }c:=h^{\prime }\left( 1\right) ,
\end{equation*}%
we would get%
\begin{equation*}
ab=a+b\text{, \ \ \ \ }bc=b+c\text{, \ \ \ \ }ab^{2}c=ab+bc.
\end{equation*}%
Since this system has no solution satisfying the conditions $a\geq 1,b\geq
1,c\geq 1$, the functions $f,g,h$ do not exist.
\end{remark}

%{\LARGE This remark may incline us toward the conjecture that equation (13)
%has no an admissible solution}.

\section{\protect\bigskip Example of application of invariance identity}

In this section we show that knowing the invariant mean with respect to a
given continuous and strict mean-type mapping we can determine effectively
the limit of the sequence of its iterates, as well as, to find the form of
all continuous functions which are invariant with respect to this mapping.

\begin{theorem}
\label{thm:ex} Let $f,g:(0,\infty )\rightarrow \left( 0,\infty \right) $ be
continuous strictly increasing functions such that the function $\frac{f}{g}$
is strictly increasing. Then

(i) \ the functions $M,N:\left( 0,\infty \right) ^{2}\rightarrow \left(
0,\infty \right) $ given by%
\begin{equation*}
M\left( x,y\right) :=f^{-1}\left( \frac{g(y)}{g(x)}f\left( x\right) \right)
\text{, \ \ \ \ \ }N\left( x,y\right) :=f^{-1}\left( \frac{g(x)}{g(y)}%
f\left( y\right) \right)
\end{equation*}%
are generalized weighted quasi-geometric means, as $M=G_{\frac{f}{g},g}$ and
$N=G_{g,\frac{f}{g}};$

(ii) the quasi-geometric mean $G_{f}:\left( 0,\infty \right) ^{2}\rightarrow
\left( 0,\infty \right) $ given by%
\begin{equation*}
G_{f}\left( x,y\right) =f^{-1}\left( \sqrt{f\left( x\right)f\left( y\right)}
\right)
\end{equation*}%
is invariant with respect to the mean-type mapping $\left( M,N\right) ,$ i.e.%
\begin{equation*}
G_{f}\circ \left( M,N\right) =G_{f};
\end{equation*}

(iii) the sequence $\left( \left( M,N\right) ^{n}:n\in \mathbb{N}\right) $
of iterates of the mean-type mapping $\left( M,N\right) :\left( 0,\infty
\right) ^{2}\rightarrow \left( 0,\infty \right) ^{2}$ converges uniformly on
compact subsets of $\left( 0,\infty \right) ^{2}$ and%
\begin{equation*}
\lim_{n\rightarrow \infty }\left( M,N\right) ^{n}=\left( G_{f},G_{f}\right) ;
\end{equation*}

(iv) a continuous function $\Phi :\left( 0,\infty \right) ^{2}\rightarrow
\mathbb{R}$ is invariant with respect to the mean-type mapping $\left(
M,N\right) $, i.e. $\Phi $ satisfies the equation%
\begin{equation*}
\Phi \left( M\left( x,y\right) ,N\left( x,y\right) \right) =\Phi \left(
x,y\right) \text{, \ \ \ }x,y>0\text{,}
\end{equation*}%
if and only if there is a continuous single variable function $\varphi
:\left( 0,\infty \right) \rightarrow \mathbb{R}$ such that%
\begin{equation*}
\Phi \left( x,y\right) =\varphi \left( G_{f}\left( x,y\right) \right) \text{%
, \ \ \ }x,y>0.
\end{equation*}
\end{theorem}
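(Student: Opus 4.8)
The plan is to settle (i) and (ii) by direct computation, then to prove the convergence statement (iii), from which (iv) follows by a routine bootstrap.

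For (i) I would simply form the pointwise products of the proposed generators. With $\varphi=\frac{f}{g}$ and $\psi=g$ one has $\varphi\psi=f$, so
\[
G_{f/g,\,g}(x,y)=f^{-1}\left(\frac{f(x)}{g(x)}\,g(y)\right)=M(x,y),
\]
and symmetrically $G_{g,\,f/g}=N$. Since $\frac{f}{g}$ and $g$ are continuous, both strictly increasing, and their product $f$ is strictly monotonic, the result recalled in Section 2 shows $M$ and $N$ are strict generalized weighted quasi-geometric means. For (ii) the key observation is that $f$ transports the defining quotients so that they cancel: $f(M(x,y))=\frac{g(y)}{g(x)}f(x)$ and $f(N(x,y))=\frac{g(x)}{g(y)}f(y)$, whence $f(M(x,y))\,f(N(x,y))=f(x)f(y)$. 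Applying $f^{-1}(\sqrt{\,\cdot\,})$ gives $G_f\circ(M,N)=G_f$ at once.

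The substance is in (iii). Writing $(x_n,y_n):=(M,N)^n(x,y)$, $p_n:=\min(x_n,y_n)$ and $q_n:=\max(x_n,y_n)$, the mean property of $M,N$ forces $p_n\le x_{n+1},y_{n+1}\le q_n$, so $(p_n)$ is nondecreasing, $(q_n)$ is nonincreasing, and both converge pointwise, say $p_n\uparrow p$ and $q_n\downarrow q$ with $p\le q$. To show $p=q$, I would extract a subsequence along which $(x_n,y_n)$ converges to some $(\xi,\eta)$ with $\{\xi,\eta\}=\{p,q\}$; if $p<q$ then $\xi\neq\eta$, and the strictness of the means gives $p<M(\xi,\eta),N(\xi,\eta)<q$, which contradicts $p_{n+1}\to p$ through the continuity of $M,N$. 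Hence $p=q=:K(x,y)$, and $K$ is a mean. The continuous invariant $G_f$ from (ii) satisfies $G_f(x_n,y_n)=G_f(x,y)$ for every $n$; letting $n\to\infty$ and using reflexivity of $G_f$ identifies the common limit as $K=G_f$. Finally, $p_n$ and $q_n$ are continuous, monotone in $n$, and converge pointwise to the continuous function $G_f$, so Dini's theorem upgrades the convergence to uniform on compact subsets; squeezing $x_n,y_n$ between $p_n$ and $q_n$ then yields $(M,N)^n\to(G_f,G_f)$ uniformly on compacta.

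Part (iv) is a short deduction. The implication $\Phi=\varphi\circ G_f\Rightarrow\Phi$ invariant is immediate from (ii). Conversely, if $\Phi$ is continuous and invariant, induction gives $\Phi\circ(M,N)^n=\Phi$; passing to the limit with (iii) and the continuity of $\Phi$ yields $\Phi(x,y)=\Phi(G_f(x,y),G_f(x,y))$, so $\varphi(t):=\Phi(t,t)$ is the required continuous single-variable function. I expect the only genuine difficulty to lie in (iii), specifically in ruling out $p<q$ via the subsequence-and-strictness argument and in securing uniformity through Dini's theorem; everything else is cancellation or a direct passage to the limit.
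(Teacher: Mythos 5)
Your proof is correct, and on parts (i), (ii), and (iv) it follows essentially the paper's route: (i) via the generator identity $\left(\tfrac{f}{g}\right)\cdot g=f$ (the paper instead notes reflexivity and strict monotonicity in each variable and invokes Remark 2(ii)), (ii) by the same cancellation $f(M(x,y))f(N(x,y))=f(x)f(y)$, and (iv) by the same bootstrap: induction, passage to the limit using (iii), and $\varphi(t):=\Phi(t,t)$, plus the trivial converse. The genuine divergence is in (iii): the paper does not prove it — it simply observes that $M,N$ are strict continuous means and cites the general theorem on iterations of mean-type mappings from \cite{AnnMath.Sil.1999} (see also \cite{GrazerMathBer.209}), together with (ii), to conclude convergence to the invariant mean. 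You instead give a self-contained proof: monotonicity of $p_n=\min(x_n,y_n)$ and $q_n=\max(x_n,y_n)$, the subsequence-and-strictness argument to rule out $p<q$ (your key steps check out: any subsequential limit $(\xi,\eta)$ satisfies $\min(\xi,\eta)=p$, $\max(\xi,\eta)=q$ by continuity of $\min$ and $\max$, and strictness of $M,N$ then contradicts $p_{n_k+1}\to p$), identification of the limit as $G_f$ via its invariance and reflexivity, and Dini's theorem for uniformity on compacta (which implicitly uses that each $p_n,q_n$ is continuous in $(x,y)$ — true, since they are $\min$/$\max$ of compositions of the continuous maps $M,N$, but worth saying). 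Your route buys self-containedness and in effect reproves the cited result of Matkowski in this special case; the paper's route buys brevity at the cost of sending the reader to the reference.
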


\begin{proof}
(i) By assumption, $M$ and $N$ are both reflexive and strictly increasing
in each variable so, by Remark 2(ii), they are strict means. The
remaining result is easy to verify.

(ii) By the definitions of $G_{f},$ $M$ and $N$, we have, for all $x,y>0,$%
\begin{eqnarray*}
G_{f}\circ \left( M,N\right) \left( x,y\right) &=&G_{f}\left( \left( M\left(
x,y\right) ,N\left( x,y\right) \right) \right) \\
&=& f^{-1}\left(\sqrt{f\left[f^{-1}\left( \frac{g(y)}{g(x)}f\left( x\right)
\right) \right] \cdot f\left[f^{-1}\left( \frac{g(x)}{g(y)}f\left( y\right)
\right) \right]} \right) \\
&=&f^{-1}\left( \sqrt{f\left( x\right) f\left( y\right)} \right)
=G_{f}\left( x,y\right) ,
\end{eqnarray*}%
which proves (ii).

Since $M$ and $N$ are strict and continuous means, (iii) follows from (ii)
and the result of \cite{AnnMath.Sil.1999} (see also \cite{GrazerMathBer.209}).

(iv) Assume that $\Phi :\left( 0,\infty \right) ^{2}\rightarrow \mathbb{R}$
is invariant with respect to the mean-type mapping $\left( M,N\right) $, so%
\begin{equation*}
\Phi \left( x,y\right) =\Phi \left( M\left( x,y\right) ,N\left( x,y\right)
\right) =\left( \Phi \circ \left( M,N\right) \right) \left( x,y\right) \text{%
, \ \ \ }x,y>0.
\end{equation*}%
Hence, by induction,
\begin{equation*}
\Phi \left( x,y\right) =\left( \Phi \circ \left( M,N\right) ^{n}\left(
x,y\right) \right) \text{, \ \ \ \ }n\in \mathbb{N}\text{; }x,y>0.
\end{equation*}%
Hence, in view of (iii), by the continuity of $\Phi $, we have, for all $%
x,y>0$,
\begin{eqnarray*}
\Phi \left( x,y\right) &=&\lim_{n\rightarrow \infty }\left( \Phi \circ
\left( M,N\right) ^{n}\left( x,y\right) \right) =\left( \Phi \circ \left(
G_{f},G_{f}\right) \right) \left( x,y\right) \\
&=&\Phi \left( G_{f}\left( x,y\right) ,G_{f}\left( x,y\right) \right)
=\varphi \left( G_{f}\left( x,y\right) \right) \text{, }
\end{eqnarray*}%
where $\varphi :\left( 0,\infty \right) \rightarrow \mathbb{R}$ is defined
by
\begin{equation*}
\varphi \left( x\right) :=\Phi \left( x,x\right) \text{, \ \ \ \ }x>0\text{.
}
\end{equation*}%
To prove the converse implication, take an arbitrary function $\varphi
:\left( 0,\infty \right) \rightarrow \mathbb{R}$ and define $\Phi :\left(
0,\infty \right) ^{2}\rightarrow \mathbb{R}$ by%
\begin{equation*}
\Phi \left( x,y\right) :=\varphi \left( G_{f}\left( x,y\right) \right) .
\end{equation*}%
Then, for arbitrary $x,y>0,$ using the definition of $\Phi $ and (ii) \
we get%
\begin{equation*}
\left( \Phi \circ \left( M,N\right) \right) \left( x,y\right) =\varphi
\left( G_{f}\left( M\left( x,y\right) ,N\left( x,y\right) \right) \right)
=\varphi \left( G_{f}\left( x,y\right) \right) =\Phi \left( x,y\right) ,
\end{equation*}%
which proves (v).
\end{proof}

\begin{remark}
Let $f ,g, h :I\rightarrow (0,\infty)$ be continuous and
all strictly increasing or all strictly decreasing. Using a similar argument to the proof of Theorem \ref{thm:ex}, we can show that a continuous function $\Phi : I^2 \rightarrow \mathbb{R}$ is invariant with respect to the mean-type mapping $(G_{f,g},G_{g,h}) : I^2 \rightarrow I$ if and only if there is a continuous single variable function $\varphi
:I \rightarrow \mathbb{R}$ such that $\Phi \left( x,y\right) =\varphi \left( G_{fg,gh}\left( x,y\right) \right)$ for all $x,y\in I$.

\end{remark}

A similar argument to the proof of Theorem \ref{thm:ex}, shows the following

\begin{corollary}
Let $f,g:\mathbb{R}\rightarrow \mathbb{R} $ be continuous strictly
increasing functions such that the function $f-g$ is strictlay increasing. Then

(i) \ The functions $M,N:\mathbb{R} ^{2}\rightarrow \mathbb{R} $ given by%
\begin{equation*}
M\left( x,y\right) :=f^{-1}\left( g(y)-g(x)+f\left( x\right) \right) \text{,
\ \ \ \ \ }N\left( x,y\right) :=f^{-1}\left( g(x)-g(y)+f\left( y\right)
\right)
\end{equation*}%
are generalized weighted strict quasi-arithmetic means, as $M=A_{f-g,g}$ and
$N=A_{g,f-g};$

(ii) the quasi-arithmetic mean $A_{f}:\mathbb{R} ^{2}\rightarrow \mathbb{R} $
given by%
\begin{equation*}
A_{f}\left( x,y\right) =f^{-1}\left( \frac{f(x)+f(y)}{2} \right)
\end{equation*}%
is invariant with respect to the mean-type mapping $\left( M,N\right) ,$ i.e.%
\begin{equation*}
A_{f}\circ \left( M,N\right) =A_{f};
\end{equation*}

(iii) the sequence $\left( \left( M,N\right) ^{n}:n\in \mathbb{N}\right) $
of iterates of the mean-type mapping $\left( M,N\right) :\mathbb{R}%
^{2}\rightarrow \mathbb{R}^{2}$ converges uniformly on compact subsets of $%
\mathbb{R}^{2}$ and%
\begin{equation*}
\lim_{n\rightarrow \infty }\left( M,N\right) ^{n}=\left( A_{f},A_{f}\right) ;
\end{equation*}

(iv) a continuous function $\Phi :\mathbb{R} ^{2}\rightarrow \mathbb{R}$ is
invariant with respect to the mean-type mapping $\left( M,N\right) $, i.e. $%
\Phi $ satisfies the equation%
\begin{equation*}
\Phi \left( M\left( x,y\right) ,N\left( x,y\right) \right) =\Phi \left(
x,y\right) \text{, \ \ \ }x,y \in \mathbb{R}\text{,}
\end{equation*}%
if and only if there is a continuous single variable function $\varphi :%
\mathbb{R} \rightarrow \mathbb{R}$ such that%
\begin{equation*}
\Phi \left( x,y\right) =\varphi \left( A_{f}\left( x,y\right) \right) \text{%
, \ \ \ }x,y\in \mathbb{R}.
\end{equation*}
\end{corollary}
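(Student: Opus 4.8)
The plan is to carry over the proof of Theorem~\ref{thm:ex} to the additive setting, with the arithmetic mean $A_f$ playing the role of the geometric mean $G_f$ and additive cancellation replacing multiplicative cancellation. For part (i), I would simply unfold the definition $A_{\varphi,\psi}(x,y)=(\varphi+\psi)^{-1}(\varphi(x)+\psi(y))$ with the choices $\varphi=f-g$, $\psi=g$. Since $(f-g)+g=f$, one gets $A_{f-g,g}(x,y)=f^{-1}\left(f(x)-g(x)+g(y)\right)=M(x,y)$, and symmetrically $A_{g,f-g}=N$. By hypothesis $f-g$ and $g$ are continuous and strictly increasing, so $M$ and $N$ are generalized strict weighted quasi-arithmetic means; being reflexive and strictly increasing in each variable, they are strict means by Remark~2(ii).

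The only genuine computation is part (ii), and it rests on the cancellation of the $g$-terms. I would evaluate $A_f(M(x,y),N(x,y))=f^{-1}\left(\frac{f(M(x,y))+f(N(x,y))}{2}\right)$ and substitute $f(M(x,y))=g(y)-g(x)+f(x)$ together with $f(N(x,y))=g(x)-g(y)+f(y)$. The $g$-contributions cancel and the bracket reduces to $\frac{f(x)+f(y)}{2}$, so $A_f\circ(M,N)=A_f$, which is (ii).

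For part (iii), I would invoke the convergence theorem for mean-type mappings from \cite{AnnMath.Sil.1999}: as $M$ and $N$ are continuous strict means and $A_f$ is a continuous mean invariant under $(M,N)$ by (ii), the iterates $(M,N)^n$ converge uniformly on compact subsets of $\mathbb{R}^2$ to $(A_f,A_f)$. Part (iv) then follows exactly as in Theorem~\ref{thm:ex}(iv): invariance of $\Phi$ gives $\Phi=\Phi\circ(M,N)^n$ for every $n$ by induction, and letting $n\to\infty$, using (iii) and the continuity of $\Phi$, yields $\Phi(x,y)=\Phi(A_f(x,y),A_f(x,y))=\varphi(A_f(x,y))$ with $\varphi(t):=\Phi(t,t)$; the converse is immediate from (ii).

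The step requiring the most care is not any single calculation but the verification that the standing hypotheses---$f$, $g$ and $f-g$ all continuous and strictly increasing---are precisely what is needed to place $M$ and $N$ in the class of generalized strict weighted quasi-arithmetic means, since the convergence result underlying (iii) and hence (iv) depends on $(M,N)$ being a continuous strict mean-type mapping. Beyond this bookkeeping, no idea is needed that is not already present in the additive analogue supplied by Theorem~\ref{thm:ex}.
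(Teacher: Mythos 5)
Your proposal is correct and follows exactly the route the paper intends: the paper gives no separate proof of this corollary, stating only that it follows by ``a similar argument to the proof of Theorem~\ref{thm:ex},'' and your adaptation (identifying $M=A_{f-g,g}$, $N=A_{g,f-g}$, the additive cancellation giving invariance of $A_f$, the appeal to \cite{AnnMath.Sil.1999} for convergence of the iterates, and the iteration-plus-continuity argument for (iv)) is precisely that argument transcribed to the additive setting.
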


\section{\protect\bigskip Equality of two considered operations}

Let $I\subset (0,\infty)$ be an interval that is closed with respect to the addition and
multiplication. In this section, given two operations $C_{f,g}$ and $C_{\phi
,\psi }$ (resp.\ $D_{f,g}$ and $D_{\bar{f} ,\bar{g} }$), we determine the conditions under which they are equal.

\begin{proposition}
Let $f,g,\phi ,\psi \colon I\rightarrow I$ be bijective and continuous
functions. Then we have $C_{f,g}=C_{\phi ,\psi }$ iff $g=\psi $ and $f=a\phi
$, for some $a\in I$, $a\neq 0$.
\end{proposition}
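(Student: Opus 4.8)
The backward implication is the easier half, and I would dispose of it first by isolating a scaling invariance in the first generator. For any constant $a\in I$ with $af$ still a bijection of $I$, one has $(af)\circ g=a\,(f\circ g)$, so that $((af)\circ g)^{-1}(s)=(f\circ g)^{-1}(s/a)$, while $(af)(x)\,g(y)=a\,f(x)g(y)$; the factor $a$ cancels and hence $C_{af,g}=C_{f,g}$. Thus, if $g=\psi$ and $f=a\phi$, then $C_{f,g}=C_{a\phi,\psi}=C_{\phi,\psi}$, as required. Note that the asymmetry of the claimed condition (an exact equality $g=\psi$ but only $f=a\phi$ up to a constant) is already explained here: scaling the \emph{first} generator leaves $C$ unchanged, whereas scaling the second generally does not.

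For the forward implication I assume $C_{f,g}=C_{\phi,\psi}$, that is $(f\circ g)^{-1}(f(x)g(y))=(\phi\circ\psi)^{-1}(\phi(x)\psi(y))$ for all $x,y\in I$. Applying $\phi\circ\psi$ to both sides and writing $H:=(\phi\circ\psi)\circ(f\circ g)^{-1}$, this becomes $H(f(x)g(y))=\phi(x)\psi(y)$. Since $f$ and $g$ are onto $I$, I substitute $u=f(x)$, $v=g(y)$ and set $P:=\phi\circ f^{-1}$, $Q:=\psi\circ g^{-1}$, obtaining the multiplicative Pexider equation $H(uv)=P(u)Q(v)$, valid for all $u,v\in I$ (so that $uv\in I$ by multiplicative closure). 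All three functions take values in $I\subset(0,\infty)$, so passing to logarithms via $s=\log u$, $t=\log v$ turns this into the additive Pexider equation $\tilde H(s+t)=\tilde P(s)+\tilde Q(t)$ on the interval $J=\log I$, which is closed under addition. By continuity its solutions are affine, whence $P(u)=\alpha u^{\lambda}$, $Q(v)=\beta v^{\lambda}$ and $H(w)=\alpha\beta\,w^{\lambda}$ for some $\alpha,\beta>0$ and $\lambda\in\mathbb{R}$. Translating back through $\phi=P\circ f$ and $\psi=Q\circ g$ yields the power laws $\phi=\alpha f^{\lambda}$ and $\psi=\beta g^{\lambda}$.

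It remains to force $\lambda=1$ and $\beta=1$, which I expect to be the main obstacle. From $\phi\circ\psi=H\circ(f\circ g)$ together with the forms just found, and cancelling the bijection $g$ on the right, one extracts the single-variable identity $f(\beta v^{\lambda})=\beta^{1/\lambda}f(v)$ for all $v\in I$. I would then use that $u\mapsto\alpha u^{\lambda}$ and $v\mapsto\beta v^{\lambda}$ must be bijections of $I$ \emph{onto} $I$: reading off the behaviour at the endpoints of $I$ (the onto condition forces the image of the dilation to recover $I$, i.e. $cI=I$ for the relevant constant $c$) pins down $\beta=1$, and injectivity of $f$ applied to the resulting relation $f(v^{\lambda})=f(v)$ then gives $v^{\lambda}=v$ on $I$, hence $\lambda=1$. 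With $\lambda=\beta=1$ we obtain $\psi=g$ and $\phi=\alpha f$, so $f=\alpha^{-1}\phi=:a\phi$ with $a=\alpha^{-1}\in I$. The delicate point, and the step that genuinely depends on the structure of $I$, is precisely this trivialization of the exponent $\lambda$ and of the second-generator scaling $\beta$ from the bijectivity onto $I$; the passage to power laws via Pexider is automatic, but how the dilation $u\mapsto\alpha u^{\lambda}$ interacts with the endpoints of $I$ is where the normalisation hidden in the condition $a\in I$ must be used with care.
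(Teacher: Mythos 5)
Your route is the same as the paper's: reduce the equality $C_{f,g}=C_{\phi ,\psi }$ to a multiplicative Pexider equation, solve it by power functions, and then try to trivialize the constants. (The only cosmetic difference is the direction of substitution: you set $u=f(x)$, $v=g(y)$ and obtain $\phi =\alpha f^{\lambda }$, $\psi =\beta g^{\lambda }$, while the paper sets $u=\phi (x)$, $v=\psi (y)$ and obtains $f=a\phi ^{c}$, $g=b\psi ^{c}$.) Your backward implication via the scaling invariance $C_{af,g}=C_{f,g}$, your Pexider step, and your derived identity $f(\beta v^{\lambda })=\beta ^{1/\lambda }f(v)$ for $v\in I$ are all correct.

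The gap is in the last step, exactly where you flagged the delicacy, and it cannot be repaired in the stated generality. Bijectivity of $v\mapsto \beta v^{\lambda }$ from $I$ onto $I$ pins down nothing when $I=(0,\infty )$ --- an interval closed under addition and multiplication, hence admissible here --- because then \emph{every} dilation with $\beta >0$, $\lambda >0$ is a bijection of $I$, so your endpoint argument yields no constraint. Moreover, the identity $f(\beta v^{\lambda })=\beta ^{1/\lambda }f(v)$ genuinely admits solutions with $\beta \neq 1$ (any homogeneous $f$, e.g.\ $f=\mathrm{id}$, with $\lambda =1$), and these produce counterexamples to the necessity half of the proposition itself: on $I=(0,\infty )$ take $\phi =\mathrm{id}$, let $\psi $ be any continuous bijection of $I$, and put $f(t)=at$, $g=b\psi $ with $b\neq 1$; all four maps are continuous bijections of $I$, and $C_{f,g}(x,y)=\psi ^{-1}\bigl((ax\cdot b\psi (y))/(ab)\bigr)=\psi ^{-1}(x\psi (y))=C_{\phi ,\psi }(x,y)$, yet $g\neq \psi $. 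So no argument could close your gap for that choice of $I$. In fairness, the paper's own proof has the identical defect: its concluding assertion that the comparison of the two displayed expressions ``implies that $b=c=1$'' is unjustified, and the same example refutes it (with $\phi =\mathrm{id}$ and $c=1$ the paper's final equation holds for every $b$). Your endpoint idea is sound precisely when $\inf I=1$, e.g.\ $I=(1,\infty )$ or $[1,\infty )$: an increasing bijection of such an $I$ must send the left end to itself, forcing $\beta =\alpha =1$, after which injectivity of $f$ in $f(v^{\lambda })=f(v)$ gives $\lambda =1$; but note that for such $I$ the map $a\phi $ with $a\neq 1$ is not even a self-bijection of $I$, so there the proposition degenerates to $f=\phi $, $g=\psi $.
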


\begin{proof}
(Necessity) Suppose that
\begin{equation*}
C_{f,g}(x,y)=C_{\phi ,\psi }(x,y)\quad x,y\in I,
\end{equation*}%
that is,
\begin{equation*}
\left( f\circ g\right) ^{-1}\left( f\left( x\right) g\left( y\right) \right)
=\left( \phi \circ \psi \right) ^{-1}\left( \phi \left( x\right) \psi \left(
y\right) \right) \quad x,y\in I.
\end{equation*}%
Setting $\alpha :=\left( f\circ g\right) \circ \left( \phi \circ \psi
\right) ^{-1}$, $\beta :=f\circ \phi ^{-1}$, $\gamma :=g\circ \psi ^{-1}$, $%
u=\phi (x)$, and $v=\psi (y)$ we get
\begin{equation*}
\alpha (uv)=\beta (u)\gamma (v)\quad u\in \phi \left( I\right) ,v\in \psi
\left( I\right) .
\end{equation*}%
The latter equation is Pexider's functional equation and has the following
solutions (see, e.g., \cite{Acz2006})
\begin{equation*}
\beta (u)=au^{c}\quad \mbox{and}\quad \gamma (v)=bv^{c}\quad u\in \phi
\left( I\right) ,v\in \psi \left( I\right) ,
\end{equation*}%
for some $a,b,c\in I$, $a,b,c\neq 0$. From these equations we get
\begin{equation*}
f(x)=a\phi (x)^{c}\quad \mbox{and}\quad g(y)=b\psi (y)^{c}\quad x,y\in I,
\end{equation*}%
for some $a,b,c\in I$, $a,b,c\neq 0$. Thus, on the one hand, we get
\begin{equation*}
\left( f\circ g\right) ^{-1}\left( f(x)g(y)\right) =\psi ^{-1}\left( \left(
\frac{1}{b}\phi ^{-1}\left( \left( \phi (x)^{c}b\psi (y)^{c}\right) ^{\frac{1%
}{c}}\right) \right) ^{\frac{1}{c}}\right) \quad x,y\in I,
\end{equation*}%
for some $b,c\in I$, $b,c\neq 0$, and on the other hand, we get
\begin{equation*}
\left( \phi \circ \psi \right) ^{-1}\left( \phi (x)\psi (y)\right) =\psi
^{-1}\left( \phi ^{-1}\left( \phi (x)\psi (y)\right) \right) \quad x,y\in I.
\end{equation*}%
Hence, we get
\begin{equation*}
\psi ^{-1}\left( \left( \frac{1}{b}\phi ^{-1}\left( \left( \phi (x)^{c}b\psi
(y)^{c}\right) ^{\frac{1}{c}}\right) \right) ^{\frac{1}{c}}\right) =\psi
^{-1}\left( \phi ^{-1}\left( \phi (x)\psi (y)\right) \right) \quad x,y\in I,
\end{equation*}%
which implies that $b=c=1$. Thus, for all $x\in I$, we get $g(x)=\psi (x)$
and $f(x)=a\phi (x)$, for some $a\in I$, $a\neq 0$.

(Sufficiency) Obvious.
\end{proof}

\begin{proposition}
Let $f,g,\bar{f},\bar{g}\colon I\rightarrow I$ be bijective and continuous
functions. Then we have $D_{f,g}=D_{\bar{f},\bar{g}}$ iff $f=\bar{f}+a$, for
some $a\in I$, and $g=\bar{g}$.
\end{proposition}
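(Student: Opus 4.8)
The plan is to mimic the proof of Proposition~5 (the $C_{f,g}$ case), replacing the multiplicative structure by the additive one throughout. Recall that $D_{f,g}(x,y)=(f\circ g)^{-1}(f(x)+g(y))$. Sufficiency is trivial: if $f=\bar f+a$ and $g=\bar g$, then $f\circ g=(\bar f+a)\circ\bar g=\bar f\circ\bar g+a$ as functions (since the shift by $a$ is applied after $\bar f$), so $(f\circ g)^{-1}$ and $(\bar f\circ\bar g)^{-1}$ differ by the corresponding shift in their arguments, and one checks directly that the constant $a$ cancels between numerator and the inverse, yielding $D_{f,g}=D_{\bar f,\bar g}$. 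So the content is in the necessity direction.

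For necessity, I would start from the assumed equality
\[
(f\circ g)^{-1}(f(x)+g(y))=(\bar f\circ\bar g)^{-1}(\bar f(x)+\bar g(y)),\qquad x,y\in I,
\]
and reduce it to a Pexider equation exactly as in Proposition~5. Setting $\alpha:=(f\circ g)\circ(\bar f\circ\bar g)^{-1}$, $\beta:=f\circ\bar f^{-1}$, $\gamma:=g\circ\bar g^{-1}$, $u=\bar f(x)$, $v=\bar g(y)$, the equality becomes the additive Pexider equation
\[
\alpha(u+v)=\beta(u)+\gamma(v),\qquad u\in\bar f(I),\ v\in\bar g(I).
\]
Under continuity (inherited from the hypotheses on all four functions), the known solution (see \cite{Acz2006}) is $\beta(u)=cu+a$, $\gamma(v)=cv+b$ for some constants $c\neq0$ and $a,b$. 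Translating back gives $f(x)=c\,\bar f(x)+a$ and $g(y)=c\,\bar g(y)+b$ for all $x,y\in I$.

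It remains to force $c=1$ and $b=0$; this is the analogue of forcing $b=c=1$ in Proposition~5 and I expect it to be the only delicate point. The idea is to substitute these two forms back into the original identity and compare. On one side one computes $(f\circ g)^{-1}(f(x)+g(y))$ in terms of $\bar f,\bar g$ and the constants $a,b,c$; on the other side one has the genuine $D_{\bar f,\bar g}(x,y)=(\bar f\circ\bar g)^{-1}(\bar f(x)+\bar g(y))$. Matching these as functions of the two free variables $x,y$ over the whole interval $I$ pins down the constants: the mismatch produced by a scaling factor $c\neq1$ cannot be absorbed uniformly, forcing $c=1$, and once $c=1$ the residual constant from $g$ must vanish, giving $b=0$ and hence $g=\bar g$, while the constant $a$ from $f$ survives freely, giving $f=\bar f+a$. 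The main obstacle is carrying out this final elimination cleanly; the safest route is to differentiate or to evaluate at suitable points to read off $c$ and $b$, rather than to untangle the nested compositions symbolically.
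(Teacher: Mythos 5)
Your argument is essentially the paper's own proof: the same Pexider reduction with $\alpha:=(f\circ g)\circ(\bar f\circ\bar g)^{-1}$, $\beta:=f\circ\bar f^{-1}$, $\gamma:=g\circ\bar g^{-1}$, $u=\bar f(x)$, $v=\bar g(y)$, the same affine solutions $f=c\bar f+a$, $g=c\bar g+b$, and the same concluding step of substituting these back into $D_{f,g}=D_{\bar f,\bar g}$ to force $c=1$ and $b=0$. The paper carries out that final elimination precisely by the symbolic untangling you hoped to avoid, reducing both sides to $\bar g^{-1}\bigl(\bigl(\bar f^{-1}\bigl(\bar f(x)+\bar g(y)+b/c\bigr)-b\bigr)/c\bigr)=\bar g^{-1}\bigl(\bar f^{-1}\bigl(\bar f(x)+\bar g(y)\bigr)\bigr)$ and reading off the constants; note that your fallback of differentiating is not available here (the hypotheses give only continuity), so the direct comparison route is the one that matches the paper.
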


\begin{proof}
(Necessity) Suppose that
\begin{equation*}
D_{f,g}(x,y)=D_{\bar{f},\bar{g}}(x,y),\quad x,y\in I,
\end{equation*}%
that is,
\begin{equation*}
\left( f\circ g\right) ^{-1}\left( f\left( x\right) +g\left( y\right)
\right) =\left( \bar{f}\circ \bar{g}\right) ^{-1}\left( \bar{f}\left(
x\right) +\bar{g}\left( y\right) \right) ,\quad x,y\in I.
\end{equation*}%
Setting $\alpha :=\left( f\circ g\right) \circ \left( \bar{f}\circ \bar{g}%
\right) ^{-1}$, $\beta :=f\circ \left( \bar{f}\right) ^{-1}$, $\gamma
:=g\circ \left( \bar{g}\right) ^{-1}$, $u=\bar{f}(x)$, and $v=\bar{g}(y)$ we
get
\begin{equation*}
\alpha (u+v)=\beta (u)+\gamma (v),\quad u\in \bar{f}\left( I\right) ,v\in
\bar{g}\left( I\right) .
\end{equation*}%
The latter equation is Pexider's functional equation and has the following
solutions (see, e.g., \cite{Acz2006})
\begin{equation*}
\beta (u)=au+b\quad \mbox{and}\quad \gamma (v)=av+c,\quad u\in \bar{f}\left(
I\right) ,v\in \bar{g}\left( I\right) ,
\end{equation*}%
for some $a,b,c\in I$, $a\neq 0$. From these equations we get
\begin{equation*}
f(x)=a\bar{f}(x)+b\quad \mbox{and}\quad g(y)=a\bar{g}(y)+c,\quad x,y\in I,
\end{equation*}%
for some $a,b,c\in I$, $a\neq 0$. Thus, on the one hand, we get
\begin{equation*}
\left( f\circ g\right) ^{-1}\left( f(x)+g(y)\right) =\left( \bar{g}\right)
^{-1}\left( \frac{\left( \bar{f}\right) ^{-1}\left( \bar{f}(x)+\bar{g}(y)+%
\frac{c}{a}\right) -c}{a}\right) ,\quad x,y\in I,
\end{equation*}%
for some $a,c\in I$, $a\neq 0$, and on the other hand, we get
\begin{equation*}
\left( \bar{f}\circ \bar{g}\right) ^{-1}\left( \bar{f}(x)+\bar{g}(y)\right)
=\left( \bar{g}\right) ^{-1}\left( \left( \bar{f}\right) ^{-1}\left( \bar{f}%
(x)+\bar{g}(y)\right) \right) ,\quad x,y\in I.
\end{equation*}%
Hence, we get
\begin{equation*}
\left( \bar{g}\right) ^{-1}\left( \frac{\left( \bar{f}\right) ^{-1}\left(
\bar{f}(x)+\bar{g}(y)+\frac{c}{a}\right) -c}{a}\right) =\left( \bar{g}%
\right) ^{-1}\left( \left( \bar{f}\right) ^{-1}\left( \bar{f}(x)+\bar{g}%
(y)\right) \right) ,\quad x,y\in I,
\end{equation*}%
which implies that $a=1$ and $c=0$. Thus, for all $x\in I$, we get $f(x)=%
\bar{f}(x)+b$, for some $b\in I$, and $g(x)=\bar{g}(x)$.

(Sufficiency) Obvious.
\end{proof}

\bigskip

%\section{\protect\bigskip Final remarks}

%\begin{remark}
%Considering the operations $C_{f,g}$ and $D_{f,g}$, we can define another
%operation
%\begin{equation*}
%Q_{f,g}\left( x,y\right) =\frac{C_{f,g}(x,y)}{D_{f,g}(x,y)}\text{, \ \ \ \ }%
%x,y\in (1,\infty ).
%\end{equation*}%
%We observe that
%\begin{equation*}
%Q_{2id,2id}=H(x,y)\text{, \ \ \ \ }x,y\in (1,\infty ),
%\end{equation*}%
%where $H(x,y):=\frac{2xy}{x+y}$ is the harmonic mean. It is known that $%
%G\circ (A,H)=G,$ where $A(x,y):=\frac{x+y}{2}$ is the arithmetic mean. We
%can consider the problem to find an invariance identity for $C_{f,g},D_{f,g},
%$ and $Q_{f,g}$ and the conditions under which $Q_{f,g}$ is a mean.
%\end{remark}
%
%\begin{remark}
%Considering the operations $A_{f,g}$ and $G_{f,g}$, we can define another
%operation
%\begin{equation*}
%B_{f,g}\left( x,y\right) =\frac{G_{f,g}(x,y)^{2}}{A_{f,g}(x,y)}\text{, \ \ \
%\ }x,y\in (1,\infty ).
%\end{equation*}%
%We observe that
%\begin{equation*}
%B_{id,id}=H(x,y)\text{, \ \ \ \ }x,y\in (1,\infty ),
%\end{equation*}%
%where $H(x,y):=\frac{2xy}{x+y}$ is the harmonic mean. It is known that $%
%G\circ (A,H)=G,$ where $A(x,y):=\frac{x+y}{2}$ is the arithmetic mean. We
%can consider the problem to find an invariance identity for $M_{f,g},A_{f,g},
%$ and $B_{f,g}$ and the conditions under which $B_{f,g}$ is a mean.
%\end{remark}
%
%{\LARGE Since these two remarks suggest some further considerations, they
%should be omitted here.}

\section*{Acknowledgements}

This research is partly supported by the internal research project
R-AGR-0500 of the University of Luxembourg and by the Luxembourg National
Research Fund R-AGR-3080. %\bigskip
%
%One could either mention it here or try to write another paper.

\end{document}